\def\shorttitle{Strong Maximum Principle for Fractional Diffusion Equations}
\def\shortauthor{Y. Liu, W. Rundell and M. Yamamoto}
\newfont{\myfnt}{cmssi10 scaled 1440}
\numberwithin{equation}{section}
\def\ps@nk{\def\@oddhead{\vbox{\hbox to \hsize{\pic \footnotesize \it \shorttitle
\hfill \rm \thepage} \vspace{1mm} \vspace*{-2mm}}}
\def\@evenhead{\vbox{\hbox to \hsize{\pic \footnotesize \rm \thepage \hfill \it \shortauthor}
\vspace{1mm} \vspace*{-2mm}}}
\def\@oddfoot{} \def\@evenfoot{}}
\def\ps@first{\def\@oddhead{\vbox{\hbox to \hsize{\pic \footnotesize
} \break}}
\def\@oddfoot{} \def\@evenfoot{}}
\newtheoremstyle{thmstyle}
  {6pt}
  {6pt}
  {\it}
  {}
  {\bf}
  {}
  {.5em}
  {}
\newtheoremstyle{remstyle}
  {6pt}
  {6pt}
  {\rm}
  {}
  {\bf}
  {}
  {.5em}
  {}
\def\Section#1{\Sec{\large #1} \setcounter{equation}{0} \vskip -6mm \indent}
\def\Sec{\@Startsection{section}{1}{\z@}
                                   {-3.5ex \@plus -1ex \@minus -.2ex}%
                                   {2.3ex \@plus.2ex}%
                                   {\normalfont\large\bfseries\boldmath}}
\def\@Startsection#1#2#3#4#5#6{%
  \if@noskipsec \leavevmode \fi
  \par
  \@tempskipa #4\relax
  \@afterindenttrue
  \ifdim \@tempskipa <\z@
    \@tempskipa -\@tempskipa \@afterindentfalse
  \fi
  \if@nobreak
    \everypar{}%
  \else
    \addpenalty\@secpenalty\addvspace\@tempskipa
  \fi
  \@ifstar
    {\@ssect{#3}{#4}{#5}{#6}}%
    {\@dblarg{\@Sect{#1}{#2}{#3}{#4}{#5}{#6}}}}
\def\@Sect#1#2#3#4#5#6[#7]#8{%
  \ifnum #2>\c@secnumdepth
    \let\@svsec\@empty
  \else
    \refstepcounter{#1}%
    \protected@edef\@svsec{\@seccntformat{#1}\relax}%
  \fi
  \@tempskipa #5\relax
  \ifdim \@tempskipa>\z@
    \begingroup
      #6{%
          \@hangfrom{\hskip #3\relax\@svsec \hskip -2.5mm}%
          \interlinepenalty \@M #8\@@par}
    \endgroup
    \csname #1mark\endcsname{#7}%
    \addcontentsline{toc}{#1}{%
      \ifnum #2>\c@secnumdepth \else
        \protect\numberline{\csname the#1\endcsname}%
      \fi
      #7}%
  \else
    \def\@svsechd{%
      #6{\hskip #3\relax
      \@svsec #8}%
      \csname #1mark\endcsname{#7}%
      \addcontentsline{toc}{#1}{%
        \ifnum #2>\c@secnumdepth \else
          \protect\numberline{\csname the#1\endcsname}%
        \fi
        #7}}%
  \fi
  \@xsect{#5}}
\renewenvironment{abstract}{%
        \small
        \quotation
         \noindent {\bfseries \abstractname } }%
      {\if@twocolumn\else\endquotation\fi}
\def\Subsec{\@StartSubsection{subsection}{2}{\z@}%
                                     {-3.25ex\@plus -1ex \@minus -.2ex}%
                                     {1.5ex \@plus .2ex}%
                                     {\normalfont\normalsize\bfseries\boldmath}}
\def\@StartSubsection#1#2#3#4#5#6{%
  \if@noskipsec \leavevmode \fi
  \par
  \@tempskipa #4\relax
  \@afterindenttrue
  \ifdim \@tempskipa <\z@
    \@tempskipa -\@tempskipa \@afterindentfalse
  \fi
  \if@nobreak
    \everypar{}%
  \else
    \addpenalty\@secpenalty\addvspace\@tempskipa
  \fi
  \@ifstar
    {\@ssect{#3}{#4}{#5}{#6}}%
    {\@dblarg{\@SubSect{#1}{#2}{#3}{#4}{#5}{#6}}}}
\def\@SubSect#1#2#3#4#5#6[#7]#8{%
  \ifnum #2>\c@secnumdepth
    \let\@svsec\@empty
  \else
    \refstepcounter{#1}%
    \protected@edef\@svsec{\@seccntformat{#1}\relax}%
  \fi
  \@tempskipa #5\relax
  \ifdim \@tempskipa>\z@
    \begingroup
      #6{%
          \@hangfrom{\hskip #3\relax\@svsec\hskip -1.5mm}%
          \interlinepenalty \@M #8\@@par}
    \endgroup
    \csname #1mark\endcsname{#7}%
    \addcontentsline{toc}{#1}{%
      \ifnum #2>\c@secnumdepth \else
        \protect\numberline{\csname the#1\endcsname}%
      \fi
      #7}%
  \else
    \def\@svsechd{%
      #6{\hskip #3\relax
      \@svsec #8}%
      \csname #1mark\endcsname{#7}%
      \addcontentsline{toc}{#1}{%
        \ifnum #2>\c@secnumdepth \else
          \protect\numberline{\csname the#1\endcsname}%
        \fi
        #7}}%
  \fi
  \@xsect{#5}}
\def\list#1#2{\ifnum \@listdepth >5\relax \@toodeep \else \global
\advance \@listdepth\@ne \fi \rightmargin \z@ \listparindent\z@
\itemindent\z@ \csname @list\romannumeral\the\@listdepth\endcsname
\def\@itemlabel{#1}\let\makelabel\@mklab \@nmbrlistfalse #2\relax
\@trivlist \parskip 0pt \parindent\listparindent \advance \linewidth
-\rightmargin \advance\linewidth -\leftmargin \advance\@totalleftmargin
\leftmargin \parshape \@ne \@totalleftmargin \linewidth \ignorespaces}
\renewcommand{\@makecaption}[2]{\begin{center}#1. #2\end{center}}
\theoremstyle{thmstyle}
\newtheorem{thm}{\indent Theorem}[section]
\newtheorem{lem}[thm]{\indent Lemma}
\newtheorem{coro}[thm]{\indent Corollary}
\newtheorem{prob}[thm]{\indent Problem}
\theoremstyle{remstyle}
\newtheorem{rem}[thm]{\indent Remark}
\newsavebox{\mygraphic}
\def\pic{\begin{picture}(0,0) \put(-210,-1250){\usebox{\mygraphic}} \end{picture}}
\newfont{\HUGEbf}{cmbx10 scaled 3500}
\definecolor{gray}{rgb}{0.9,0.9,0.9}
\def\thebibliography#1{\section*{\bf \large References}
\list{[\arabic{enumi}]} {\settowidth \labelwidth{[#1]} \leftmargin
\labelwidth \advance \leftmargin \labelsep \usecounter{enumi}}
\def\newblock{\hskip .11em plus .33em minus .07em} \footnotesize \sloppy \clubpenalty
4000 \widowpenalty 4000 \sfcode`\.=1000 \relax}
\def\BC{\mathbb C}
\def\BN{\mathbb N}
\def\BR{\mathbb R}
\def\cA{\mathcal A}
\def\cD{\mathcal D}
\def\cE{\mathcal E}
\def\rd{\mathrm d}
\def\rRe{\mathrm{Re}}
\def\e{\mathrm e}
\def\supp{\mathrm{supp}}
\def\Ga{\Gamma}
\def\Om{\Omega}
\def\al{\alpha}
\def\be{\beta}
\def\ga{\gamma}
\def\de{\delta}
\def\ve{\varepsilon}
\def\ze{\zeta}
\def\la{\lambda}
\def\vp{\varphi}
\def\om{\omega}
\def\f{\frac}
\def\ov{\overline}
\def\pa{\partial}
\def\wt{\widetilde}
\theoremstyle{definition}
\numberwithin{equation}{section}
\title{\Large\bf\boldmath Strong Maximum Principle for Fractional Diffusion\\
Equations and an Application to\\
an Inverse Source Problem$^*$}
\author{\large Yikan LIU$^\dag$\qquad William RUNDELL$^\ddag$\qquad Masahiro
YAMAMOTO$^\dag$}
\date{}
\begin{document}

\maketitle

\thispagestyle{first}
\renewcommand{\thefootnote}{\fnsymbol{footnote}}

\footnotetext{\hspace*{-5mm} \begin{tabular}{@{}r@{}p{14cm}@{}} &
Manuscript last updated: \today.\\
$^\dag$ & Graduate School of Mathematical Sciences, The University of Tokyo,
3-8-1 Komaba, Meguro-ku, Tokyo 153-8914, Japan. E-mail: ykliu@ms.u-tokyo.ac.jp,
myama@ms.u-tokyo.ac.jp\\
$^\ddag$ & Department of Mathematics, Texas A\&M University, College Station,
TX 77843-3368, USA.\\
& E-mail: rundell@math.tamu.edu\\
$^*$ & Y. Liu and M. Yamamoto have been partly supported by the A3 Foresight
Program ``Modeling and Computation of Applied Inverse Problems'' by Japan
Society of the Promotion of Science.
\end{tabular}}

\renewcommand{\thefootnote}{\arabic{footnote}}

\begin{abstract}
The strong maximum principle is a remarkable characterization of parabolic
equations, which is expected to be partly inherited by fractional diffusion
equations. Based on the corresponding weak maximum principle, in this paper we
establish a strong maximum principle for time-fractional diffusion equations
with Caputo derivatives, which is slightly weaker than that for the parabolic
case. As a direct application, we give a uniqueness result for a related
inverse source problem on the determination of the temporal component of the
inhomogeneous term.

\vskip 4.5mm

\noindent\begin{tabular}{@{}l@{ }p{10cm}} {\bf Keywords } & Fractional
diffusion equation, Caputo derivative,\\
& Strong maximum principle, Mittag-Leffler function,\\
& Inverse source problem, Fractional Duhamel's principle
\end{tabular}

\vskip 4.5mm

\noindent{\bf AMS Subject Classifications } 35R11, 26A33, 35B50, 35R30

\end{abstract}

\baselineskip 14pt

\setlength{\parindent}{1.5em}

\setcounter{section}{0}

\Section{Introduction and main results}\label{sec-intro}

Let $\Om\subset\BR^d$ ($d=1,2,3$) be an open bounded domain with a smooth
boundary (for example, of $C^\infty$ class), $T>0$ and $0<\al<1$. Consider the
following initial-boundary value problem for a time-fractional diffusion
equation
\begin{equation}\label{eq-ibvp-u}
\begin{cases}
\pa_t^\al u(x,t)+\cA u(x,t)=F(x,t) & (x\in\Om,\ 0<t\le T),\\
u(x,0)=a(x) & (x\in\Om),\\
u(x,t)=0 & (x\in\pa\Om,\ 0<t\le T),
\end{cases}
\end{equation}
where $\pa_t^\al$ denotes the Caputo derivative defined by
\[
\pa_t^\al f(t):=\f1{\Ga(1-\al)}\int_0^t\f{f'(s)}{(t-s)^\al}\,\rd s,
\]
and $\Ga(\,\cdot\,)$ denotes the Gamma function. Here $\cA$ is an elliptic
operator defined for $f\in\cD(\cA):=H^2(\Om)\cap H^1_0(\Om)$ as
\begin{equation}\label{eq-def-A}
\cA f(x)=-\sum_{i,j=1}^d\pa_j(a_{ij}(x)\pa_if(x))+c(x)f(x)\quad(x\in\Om),
\end{equation}
where $a_{ij}=a_{ji}$ ($1\le i,j\le d$) and $c\ge0$ in $\ov\Om\,$. Moreover, it
is assumed that $a_{ij}\in C^1(\ov\Om)$, $c\in C(\ov\Om)$ and there exists a
constant $\de>0$ such that
\[
\de\sum_{i=1}^d\xi_i^2\le\sum_{i,j=1}^da_{ij}(x)\xi_i\xi_j\quad
(\forall\,x\in\ov\Om\,,\ \forall\,(\xi_1,\ldots,\xi_d)\in\BR^d).
\]
The assumptions on the initial data $a$ and the source term $F$ will be
specified later.

Fractional diffusion equations, especially the governing equation in
\eqref{eq-ibvp-u} with a Caputo derivative in time, have been widely used as
model equations for describing the anomalous diffusion phenomena in highly
heterogeneous aquifer and complex viscoelastic material (see
\cite{AG92,GCR92,HH98,MK00,N86}). Due to its practical applications,
\eqref{eq-ibvp-u} has drawn extensive attentions of mathematical researchers
during the recent years. In Luchko \cite{L10}, the generalized solution to
\eqref{eq-ibvp-u} with $F=0$ was represented by means of the Mittag-Leffler
function, and the unique existence of the solution was proved. Sakamoto and
Yamamoto \cite{SY11} investigated the well-posedness and the asymptotic
behavior of the solution to \eqref{eq-ibvp-u}. Very recently, Gorenflo et al.
\cite{GLY15} re-defined the Caputo derivative in the fractional Sobolev spaces
and investigated \eqref{eq-ibvp-u} from the viewpoint of the operator theory.
Regarding numerical treatments, we refer e.g.\! to \cite{LZATB07,MT04} for the
finite difference method and \cite{JLPZ13,JLZ13} for the finite element method.
Meanwhile, \eqref{eq-ibvp-u} has also gained population among the inverse
problem school; recent literatures include \cite{LRYZ13,RXZ13,JR15}. Here we do
not intend to enumerate a complete list of related works. It reveals in the
existing works that fractional diffusion equations show certain similarities to
classical parabolic equations (i.e., $\al=1$ in \eqref{eq-ibvp-u}), whereas
also diverge considerably from their integer prototypes in the senses of the
limited smoothing property in space and slow decay in time.

Other than the above mentioned aspects, the maximum principle is also one of
the remarkable characterizations of parabolic equations, which is not only
significant by itself but also applicable in many related problems. However,
researches especially on the strong maximum principle for time-fractional
diffusion equations with Caputo derivatives are inadequate due to the technical
difficulties in treating the fractional derivatives. Luchko \cite{L09}
established a weak maximum principle for \eqref{eq-ibvp-u} by a key estimate of
the Caputo derivative at an extreme point, by which the uniqueness of a
classical solution was also proved. On the other hand, both weak and strong
maximum principles were recently obtained for time-fractional diffusion
equations with Riemann-Liouville derivatives (see Al-Refai and Luchko
\cite{AL14}).

In this paper, we are interested in improving the maximum principle for
fractional diffusion equations with Caputo derivatives. Based on the weak
maximum principle obtained in \cite{L09} (see Lemma \ref{lem-wmp}), first we
establish a strong maximum principle for the initial-boundary value problem
\eqref{eq-ibvp-u}, which is slightly weaker than that for the parabolic case.

\begin{thm}\label{thm-smp}
Let $a\in L^2(\Om)$ satisfy $a\ge0$ and $a\not\equiv0$, $F=0$, and $u$ be the
solution to \eqref{eq-ibvp-u} with $d\le3$. Then for any $x\in\Om$, the set
$\cE_x:=\{t>0;\,u(x,t)\le0\}$ is at most a finite set.
\end{thm}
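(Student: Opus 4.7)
The plan is to expand $u$ in the eigenbasis of $\cA$ and exploit the real-analyticity of the Mittag-Leffler function in $t$ together with its precise long-time asymptotics. Let $\{(\la_n,\vp_n)\}_{n\ge1}$ denote the Dirichlet eigenpairs of $\cA$ on $\Om$, with $\{\vp_n\}$ an $L^2$-orthonormal basis and $0<\la_1\le\la_2\le\cdots$. Writing $a_n:=(a,\vp_n)_{L^2(\Om)}$, the representation
\[
u(x,t)=\sum_{n=1}^\infty a_n\,E_\al(-\la_n t^\al)\,\vp_n(x)
\]
holds in $L^2(\Om)$ for every $t>0$. Because $d\le3$, the Sobolev embedding $H^2(\Om)\hookrightarrow C(\ov\Om)$ combined with the smoothing estimates for the fractional solution operator upgrades this to pointwise convergence in $x\in\Om$ for each $t>0$, so the right-hand side makes sense pointwise.

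For each fixed $x\in\Om$, I would show that $t\mapsto u(x,t)$ is real-analytic on $(0,\infty)$: term-by-term differentiation is permissible because the $k$-th $t$-derivative of $E_\al(-\la_n t^\al)$ is controlled by a polynomial in $\la_n t^\al$ divided by $t^k$ times the decaying Mittag-Leffler factor, and Weyl's law $\la_n\sim n^{2/d}$ guarantees absolute convergence of each differentiated series on every compact subinterval of $(0,\infty)$. Next, the long-time expansion $E_\al(-z)=1/(\Ga(1-\al)z)+O(z^{-2})$ as $z\to+\infty$, applied term by term, yields
\[
u(x,t)=\f{(\cA^{-1}a)(x)}{\Ga(1-\al)\,t^\al}+O\bigl(t^{-2\al}\bigr)\quad(t\to\infty).
\]
Since $a\ge0$, $a\not\equiv0$, the elliptic strong maximum principle forces $\cA^{-1}a>0$ throughout $\Om$, so $u(x,t)>0$ for all sufficiently large $t$, and in particular $u(x,\cdot)\not\equiv0$ on $(0,\infty)$.

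Combining these facts with the weak maximum principle (Lemma~\ref{lem-wmp}), which gives $u\ge0$ in $\Om\times(0,\infty)$, we have $\cE_x=\{t>0:u(x,t)=0\}$. By real-analyticity and non-triviality, $\cE_x$ is discrete in $(0,\infty)$, and by the long-time asymptotic it is contained in some bounded interval $(0,T_0]$. Hence $\cE_x$ is finite provided its elements cannot accumulate at $t=0^+$, and this last point I expect to be the main technical obstacle. For $x$ with $a(x)>0$, continuity of $u$ at $t=0$ rules out accumulation immediately. For $x$ with $a(x)=0$, I would change variables $s=t^\al$ and study
\[
v(x,s):=\sum_{n=1}^\infty a_n\vp_n(x)\,E_\al(-\la_n s),
\]
each summand being an \emph{entire} function of $s\in\BC$; by an analytic-extension argument in a complex sector (or, equivalently, via the representation of $E_\al(-\la s)$ as the Laplace transform of a positive density, which makes $v(x,\cdot)$ analytic in the right half-plane $\re s>0$), one expects the zeros of $v(x,\cdot)$ on $(0,\infty)$ not to accumulate at $s=0$. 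Together with interior discreteness and boundedness, this forces $\cE_x$ to be finite.
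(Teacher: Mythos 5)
Your outline correctly reproduces the easy parts of the argument --- the eigenfunction expansion, the real-analyticity of $t\mapsto u(x,t)$ on $(0,\infty)$ (which kills accumulation of zeros at any $t_*\in(0,\infty)$), and the large-time asymptotics $u(x,t)=(\cA^{-1}a)(x)/(\Ga(1-\al)t^\al)+O(t^{-2\al})$ combined with the elliptic strong maximum principle for $\cA^{-1}a$ (which kills accumulation at $t_*=\infty$); these two steps coincide with Cases 1 and 2 of the paper's proof. But the entire difficulty of the theorem is concentrated in the one step you defer, namely excluding accumulation of zeros at $t=0^+$, and what you propose there does not work. Analyticity of $s\mapsto v(x,s)$ in the open half-plane $\rRe\,s>0$ (or in a sector) says nothing about zeros accumulating at the \emph{boundary} point $s=0$: a function such as $\sin(1/s)$ is analytic in $\rRe\,s>0$ and its positive real zeros accumulate at $0$. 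So ``one expects the zeros not to accumulate at $s=0$'' is precisely the assertion that needs proof, and no mechanism for it is supplied. Your fallback for the subcase ``$a(x)>0$'' is also not available as stated: $a$ is only in $L^2(\Om)$, so $a(x)$ has no pointwise meaning, and the solution is continuous up to $t=0$ only in $L^2(\Om)$, not pointwise in $x$.

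For comparison, the paper's treatment of this case is genuinely nontrivial and uses different tools. From $u(x_0,t_i)=0$ with $t_i\to0$, the nonnegativity of the Green function $G(x_0,\cdot,t_i)$ (a consequence of the weak maximum principle) together with $a\ge0$ forces $G(x_0,\cdot,t_i)=0$ a.e.\ on $\om:=\{a>0\}$. Testing against arbitrary $\psi\in C_0^\infty(\om)$ and iterating the identity $E_{\al,1+\ell\al}(z)=1/\Ga(1+\ell\al)+z\,E_{\al,1+(\ell+1)\al}(z)$ along the sequence $t_i\to0$ shows that all moments $\sum_n\la_n^\ell(\psi,\vp_n)\vp_n(x_0)$ vanish, whence $v_\psi(x_0,\cdot)\equiv0$ on $[0,\infty)$; a Laplace transform and a contour integral around $-\la_1$ then yield $(\psi,\vp_1)\,\vp_1(x_0)=0$ for every $\psi$, contradicting the strict positivity of the first eigenfunction. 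None of this is present, or replaceable by a soft analyticity argument, in your proposal, so the proof is incomplete at its essential point.
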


\begin{rem}
(a) By the Sobolev embedding and Lemma \ref{lem-sy11}(a) in Section
\ref{sec-pre}, we see $u\in C(\ov\Om\times(0,\infty))$ in Theorem \ref{thm-smp}
and thus the set $\cE_x$ is well-defined. According to the weak maximum
principle, it reveals that $\cE_x$ is actually the set of zero points of
$u(x,t)$ as a function of $t$, and Theorem \ref{thm-smp} asserts the strict
positivity of $u(x,t)$ for all $x\in\Om$ and almost all $t>0$ except for the
finite set $\cE_x$.

(b) Note that we have stated Theorem \ref{thm-smp} for spatial dimensions
$d\le3$. This can be generalized to arbitrary $d$ provided that our initial
data $a$ has sufficient regularity to allow a pointwise definition. For $d>3$
this will mean restricting $a$ in a subset of $L^2(\Om)$. If this is done, then
the set $\cE_x$ in Theorem \ref{thm-smp} will again be well-defined and, in
addition, in \eqref{eq-arg} we can observe that $\cA^3$ can be replaced by any
higher power $k$ necessary since the crucial requirement of
$C_0^\infty(\om)\subset\cD(\cA^k)$ is satisfied. However, to keep the
exposition simpler, we shall make the restriction $d\le3$ throughout the
remainder of the paper.

(c) It is an immediate consequence of Theorem \ref{thm-smp} that
$u>0$ a.e.\! in $\Om\times(0,\infty)$. To see this, we investigate the set
$D:=\{(x,t)\in\Om\times(0,\infty);\,u(x,t)\le0\}$ and notice
$D\cap(\{x\}\times(0,\infty))=\cE_x$. Since the characteristic function
$\chi_{\cE_x}=0$ a.e.\! in $(0,\infty)$ by Theorem \ref{thm-smp}, it follows
from Fubini's theorem that
\[
|D|=\int_{\Om\times(0,\infty)}\chi_D(x,t)\,\rd x\rd t
=\int_\Om\int_0^\infty\chi_{\cE_x}(t)\,\rd t\rd x=0,
\]
where $|\cdot|$ denotes the Lebesgue measure.

(d) If the inhomogeneous term $F$ in Theorem \ref{thm-smp} is allowed to be
non-negative in $\Om\times(0,T)$, then it follows immediately from the weak
maximum principle that $u>0$ a.e.\! in $\Om\times(0,T)$.
\end{rem}

So far, we do not know if $\cE_x=\emptyset$ ($\forall\,x\in\Om$) although it
can be conjectured. Nevertheless, we can prove the following result.

\begin{coro}\label{coro-smp}
Let $a\in L^2(\Om)$ satisfy $a>0$ a.e.\! in $\Om$, $F=0$, and $u$ be the
solution to \eqref{eq-ibvp-u}. Then $u>0$ in $\Om\times(0,\infty)$.
\end{coro}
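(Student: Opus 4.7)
The plan is to argue by contradiction, using the weak maximum principle (Lemma \ref{lem-wmp}), the linearity of problem \eqref{eq-ibvp-u}, and the strict positivity of the first Dirichlet eigenfunction of $\cA$; interestingly, Theorem \ref{thm-smp} itself does not seem to be needed.

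By Lemma \ref{lem-wmp} we have $u\ge 0$ on $\Om\times(0,\infty)$, so it suffices to rule out $u(x_0,t_0)=0$ at a fixed $x_0\in\Om$, $t_0>0$. The key linearity observation is the following: for any measurable set $E\subset\Om$, the decomposition $a=a\chi_E+a\chi_{\Om\setminus E}$ gives two non-negative $L^2$ initial data whose corresponding solutions $u^{a\chi_E}$ and $u^{a\chi_{\Om\setminus E}}$ are again non-negative (by Lemma \ref{lem-wmp}) and sum to $u$. Since their sum vanishes at $(x_0,t_0)$ and each summand is non-negative, each term must vanish there, i.e.\! $u^{a\chi_E}(x_0,t_0)=0$ for every measurable $E\subset\Om$.

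I would then encode this information in the bounded linear functional $L\colon L^2(\Om)\to\BR$, $L(b):=u^b(x_0,t_0)$, whose boundedness follows from the $L^2(\Om)\to C(\ov\Om)$ smoothing estimate in Lemma \ref{lem-sy11}(a). Riesz representation yields $\rho\in L^2(\Om)$ with $L(\,\cdot\,)=(\,\cdot\,,\rho)_{L^2(\Om)}$. The preceding step then reads
\[
\int_E a(y)\rho(y)\,\rd y=L(a\chi_E)=0\qquad\text{for every measurable }E\subset\Om,
\]
which forces $a\rho=0$ a.e.\! on $\Om$; the standing hypothesis $a>0$ a.e.\! then gives $\rho\equiv0$, i.e.\! $L\equiv0$.

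The contradiction is obtained by testing $L$ at the first Dirichlet eigenfunction $\vp_1$ of $\cA$, normalized to be strictly positive in $\Om$, with smallest (simple) eigenvalue $\la_1>0$. Since the solution with initial datum $\vp_1$ is the single Mittag-Leffler mode $u^{\vp_1}(x,t)=E_{\al,1}(-\la_1 t^\al)\vp_1(x)$, we get
\[
L(\vp_1)=E_{\al,1}(-\la_1 t_0^\al)\,\vp_1(x_0)>0
\]
by the strict positivity of $E_{\al,1}(-s)$ for $\al\in(0,1)$, $s\ge0$ and of $\vp_1$ on $\Om$, contradicting $L\equiv0$. The step I would expect to require the most care is the passage from $u^{a\chi_E}(x_0,t_0)=0$ for all $E$ to $\rho\equiv0$; this is exactly where the hypothesis $a>0$ a.e.\! (rather than merely $a\ge0$, $a\not\equiv0$) is indispensable, consistent with the fact that Theorem \ref{thm-smp} permits a genuinely non-empty finite exceptional set $\cE_x$ under the weaker hypothesis.
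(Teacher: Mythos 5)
Your proof is correct, and its skeleton is the same as the paper's: the Riesz representer $\rho$ of your functional $L$ is precisely the Green function $G(x_0,\,\cdot\,,t_0)$ of \eqref{eq-def-G}, and both arguments terminate with the observation that the first eigenmode contributes $E_{\al,1}(-\la_1t_0^\al)\,\vp_1(x_0)>0$, which is incompatible with the representer vanishing. Where you genuinely diverge is in how you force $\rho=0$ a.e.: the paper first establishes (Lemma \ref{lem-Green}) that $G(x_0,\,\cdot\,,t_0)\ge0$ a.e., by applying the weak maximum principle to the initial datum $\chi_\om$ with $\om=\{G(x_0,\,\cdot\,,t_0)<0\}$, and then combines this sign information with $a>0$ a.e.\! to get $G(x_0,\,\cdot\,,t_0)=0$ a.e.; your decomposition $a=a\chi_E+a\chi_{\Om\setminus E}$ instead yields $\int_Ea\rho\,\rd y=0$ for every measurable $E$, hence $a\rho=0$ a.e.\! without ever needing the sign of $\rho$, so Lemma \ref{lem-Green} is bypassed entirely. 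Both routes ultimately rest on the same two inputs --- the weak maximum principle applied to truncations of the data, and the strict positivity of $E_{\al,1}(-\eta)$ and of $\vp_1$ --- but yours is slightly more self-contained, at the cost of the functional-analytic packaging (Riesz representation), which the paper avoids by writing the kernel down explicitly. Your observation that Theorem \ref{thm-smp} is not needed is also consistent with the paper: its proof of the corollary does not invoke the theorem either.
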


Theorem \ref{thm-smp} is a weaker result than our expected strong maximum
principle, but is sufficient for some application. Next we study an inverse
source problem for \eqref{eq-ibvp-u} under the assumption that the
inhomogeneous term $F$ takes the form of separation of variables.

\begin{prob}\label{prob-ISP}
Let $x_0\in\Om$ and $T>0$ be arbitrarily given, and $u$ be the solution to
\eqref{eq-ibvp-u} with $a=0$ and $F(x,t)=\rho(t)\,g(x)$. Provided that $g$ is
known, determine $\rho(t)\ (0\le t\le T)$ by the single point observation data
$u(x_0,t)\ (0\le t\le T)$.
\end{prob}

The above problem is concerned with the determination of the temporal component
$\rho$ in the inhomogeneous term $F(x,t)=\rho(t)\,g(x)$ in \eqref{eq-ibvp-u}.
The spatial component $g$ simulates e.g.\! a source of contaminants which may
be dangerous. Although $g$ is usually limited to a small region given by
$\supp\,g(\subset\subset\Om)$, its influence may expand wider because $\rho(t)$
is large. We are requested to determine the time-dependent magnitude by the
pointwise data $u(x_0,t)\ (0\le t\le T)$, where $x_0$ is understood as a
monitoring point.

For the case of $x_0\in\supp\,g$, we know the both-sided stability estimate as
well as the uniqueness for Problem \ref{prob-ISP} (see Sakamoto and Yamamoto
\cite{SY11}). For the case of $x_0\notin\supp\,g$, there were no published
results even on the uniqueness. From the practical viewpoints mentioned above,
it is very desirable that $x_0$ should be spatially far from the location of
the source, that is, the case $x_0\notin\supp\,g$ should be discussed for the
inverse problem.

As a direct application of Theorem \ref{thm-smp}, we can give an affirmative
answer for the uniqueness regarding Problem \ref{prob-ISP}.

\begin{thm}\label{thm-ISP}
Under the same settings in Problem $\ref{prob-ISP}$, we further assume that
$\rho\in C^1[0,T]$, $g\in\cD(\cA^\ve)$ with some $\ve>0\ ($see Section
$\ref{sec-pre}$ for the definition of $\cD(\cA^\ve))$, $g\ge0$ and
$g\not\equiv0$. Then $u(x_0,t)=0\ (0\le t\le T)$ implies
$\rho(t)=0\ (0\le t\le T)$.
\end{thm}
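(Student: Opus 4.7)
The strategy is to translate the observation $u(x_0,\cdot)\equiv 0$ into a Laplace convolution equation whose kernel is nontrivial near $t=0$ by virtue of Theorem \ref{thm-smp}, and then to invoke Titchmarsh's convolution theorem to force $\rho\equiv 0$. Introduce first the auxiliary function $v$ as the unique solution to the homogeneous problem
\[
\pa_t^\al v+\cA v=0\ \text{in }\Om\times(0,\infty),\quad v(\,\cdot\,,0)=g,\quad v|_{\pa\Om}=0.
\]
Under the assumptions $g\in\cD(\cA^\ve)$ and $d\le 3$, the Mittag-Leffler representation of $v$ together with Sobolev embedding yields $v\in C(\ov\Om\times(0,\infty))$, so that the point evaluation $v(x_0,t)$ is meaningful for every $t>0$. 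Since $g\ge 0$ and $g\not\equiv 0$, Theorem \ref{thm-smp} applies to $v$: the set $\{t>0:v(x_0,t)\le 0\}$ is at most finite, and in particular $v(x_0,\cdot)$ is not identically zero on any subinterval $[0,\tau]$ with $\tau>0$.

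Next, establish a fractional Duhamel identity relating $u$ and $v$. Expanding both functions in eigenfunctions of $\cA$ via Lemma \ref{lem-sy11}, or equivalently by a mode-wise Laplace transform computation, one verifies
\[
u(x,t)=\f1{\Ga(\al)}\f{\rd}{\rd t}\int_0^t(t-s)^{\al-1}\int_0^s\rho(\tau)\,v(x,s-\tau)\,\rd\tau\,\rd s;
\]
the assumptions $\rho\in C^1[0,T]$ and $g\in\cD(\cA^\ve)$ are precisely what is needed to justify the termwise exchange of derivative, integral, and infinite sum at the point $x=x_0$. Substituting $x=x_0$ and using $u(x_0,\,\cdot\,)\equiv 0$ on $[0,T]$ gives
\[
\f{\rd}{\rd t}\int_0^t(t-s)^{\al-1}\Phi(s)\,\rd s=0\ \text{on }[0,T],\quad\text{where }\Phi(s):=\int_0^s\rho(\tau)\,v(x_0,s-\tau)\,\rd\tau.
\]
Since $\Phi(0)=0$, integrating recovers the homogeneous Abel integral equation for $\Phi$, whose only locally integrable solution is $\Phi\equiv 0$; equivalently, $\int_0^t\rho(\tau)\,v(x_0,t-\tau)\,\rd\tau=0$ for $0\le t\le T$.

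Finally, Titchmarsh's convolution theorem delivers $\tau_1,\tau_2\ge 0$ with $\tau_1+\tau_2\ge T$ such that $\rho=0$ a.e.\ on $[0,\tau_1]$ and $v(x_0,\,\cdot\,)=0$ a.e.\ on $[0,\tau_2]$. The positivity of $v(x_0,\cdot)$ established in the first step forces $\tau_2=0$, hence $\tau_1\ge T$, and the continuity of $\rho$ upgrades this to $\rho\equiv 0$ on $[0,T]$. The main technical hurdle is the rigorous derivation of the fractional Duhamel formula at the pointwise level $x=x_0$ under only $g\in\cD(\cA^\ve)$; once this is in place, Theorem \ref{thm-smp} feeds directly, via Titchmarsh, into the desired uniqueness.
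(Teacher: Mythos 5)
Your proposal is correct and follows essentially the same route as the paper: the fractional Duhamel principle reduces $u(x_0,\cdot)\equiv0$ to a convolution identity involving $v_g(x_0,\cdot)$, Titchmarsh's convolution theorem splits the supports, and Theorem \ref{thm-smp} eliminates the $v_g$ factor. The only (harmless) difference is the order of operations: you invert the Abel operator first so that Titchmarsh is applied directly to $\rho * v_g(x_0,\cdot)=0$, whereas the paper applies Titchmarsh to $\mu * v_g(x_0,\cdot)=0$ with $\mu$ the Riemann--Liouville derivative of $\rho$ of order $1-\al$ and then recovers $\rho$ from $\mu=0$ via the reverse formula \eqref{eq-rev-mu}.
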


In the case of $x_0\notin\supp\,g$, the condition $g\ge0$ and $g\not\equiv0$ in
$\Om$ is essential for the uniqueness. In fact, as a simple counterexample, we
consider $\Om=(0,1)$ and $g(x)=\sin2\pi x$ in Problem \ref{prob-ISP}, where the
condition $g\ge0$ is not satisfied. In this case, it follows from \cite{SY11}
that
\[
u(x,t)=\int_0^ts^{\al-1}E_{\al,\al}(-4\pi^2s^\al)\,\rho(t-s)\,\rd s\sin2\pi x,
\]
where $E_{\al,\al}(\,\cdot\,)$ denotes the Mittag-Leffler function (see
\eqref{eq-def-ML}). It is readily seen that $u(1/2,t)=0$ for $t>0$ and any
$\rho\in C^1[0,T]$. In other words, the data at $x_0=1/2$ does not imply the
uniqueness for Problem \ref{prob-ISP}.

For the same kind of inverse source problems for parabolic equations, we refer
to Cannon and Esteva \cite{CE86}, Saitoh, Tuan and Yamamoto \cite{STY02,STY03}.

The rest of this paper is organized as follows. Section \ref{sec-pre}
introduces the notations and collects the existing results concerning problem
\eqref{eq-ibvp-u}. Sections \ref{sec-proof-smp} is devoted to the proofs of
Theorem \ref{thm-smp} and Corollary \ref{coro-smp}, and the proof of Theorem
\ref{thm-ISP} is given in Section \ref{sec-proof-ISP}.

\Section{Preliminaries}\label{sec-pre}

To start with, we fix some general settings and notations. Let $L^2(\Om)$ be a
usual $L^2$-space with the inner product $(\,\cdot\,,\,\cdot\,)$ and
$H^1_0(\Om)$, $H^2(\Om)$ denote the Sobolev spaces (see, e.g., Adams
\cite{A75}). Let $\{(\la_n,\vp_n)\}_{n=1}^\infty$ be the eigensystem of the
symmetric uniformly elliptic operator $\cA$ in \eqref{eq-ibvp-u} such that
$0<\la_1<\la_2\le\cdots$ (the multiplicity is also counted), $\la_n\to\infty$
as $n\to\infty$ and $\{\vp_n\}\subset H^2(\Om)\cap H^1_0(\Om)$ forms an
orthonormal basis of $L^2(\Om)$. Then we can define the fractional power
$\cA^\ga$ for $\ga\ge0$ as
\[
\cD(\cA^\ga)=\left\{f\in L^2(\Om);\,\sum_{n=1}^\infty|\la_n^\ga\,(f,\vp_n)|^2
<\infty\right\},\quad\cA^\ga f:=\sum_{n=1}^\infty\la_n^\ga\,(f,\vp_n)\,\vp_n,
\]
and $\cD(\cA^\ga)$ is a Hilbert space with the norm
\[
\|f\|_{\cD(\cA^\ga)}
=\left(\sum_{n=1}^\infty|\la_n^\ga\,(f,\vp_n)|^2\right)^{1/2}.
\]

For $1\le p\le\infty$ and a Banach space $X$, we say that $f\in L^p(0,T;X)$
provided
\[
\|f\|_{L^p(0,T;X)}:=\left\{\!\begin{alignedat}{2}
& \left(\int_0^T\|f(\,\cdot\,,t)\|_X^p\,\rd t\right)^{1/p}
& \quad & \mbox{if }1\le p<\infty\\
& \mathop{\mathrm{ess}\sup}_{0<t<T}\|f(\,\cdot\,,t)\|_X
& \quad & \mbox{if }p=\infty
\end{alignedat}\right\}<\infty.
\]
Similarly, for $0\le t_0<T$, we set
\[
\|f\|_{C([t_0,T];X)}:=\max_{t_0\le t\le T}\|f(\,\cdot\,,t)\|_X.
\]
In addition, we define
\[
C((0,T];X):=\bigcap_{0<t_0<T}C([t_0,T];X),\quad
C([0,\infty);X):=\bigcap_{T>0}C([0,T];X).
\]

To represent the explicit solution of \eqref{eq-ibvp-u}, we first recall the
Mittag-Leffler function (see, e.g., Podlubny \cite{P99} and Gorenflo et al.
\cite{GKMS14})
\begin{equation}\label{eq-def-ML}
E_{\al,\be}(z):=\sum_{k=0}^\infty\f{z^k}{\Ga(\al k+\be)}
\quad(z\in\BC,\ \al>0,\ \be\in\BR),
\end{equation}
which possesses the following properties.

\begin{lem}\label{lem-ML}
{\rm(a)} Let $0<\al<2$ and $\eta>0$. Then $E_{\al,1}(-\eta)>0$ and the
following expansion holds:
\[
E_{\al,1}(-\eta)=\f1{\Ga(1-\al)\,\eta}+O\left(\f1{\eta^2}\right)
\quad\mbox{as }\eta\to\infty.
\]

{\rm(b)} Let $0<\al<2$ and $\be\in\BR$ be arbitrary. Then there exists a
constant $C=C(\al,\be)>0$ such that
\[
|E_{\al,\be}(-\eta)|\le\f C{1+\eta}\quad(\eta\ge0).
\]

{\rm(c)} For any $\ell=0,1,2,\ldots$, there holds
\[
E_{\al,1+\ell\al}(z)=\f1{\Ga(1+\ell\al)}+z\,E_{\al,1+(\ell+1)\al}(z)
\quad(\al>0,\ z\in\BC).
\]

{\rm(d)} For $\la>0$ and $\al>0$, we have
\[
\f\rd{\rd t}E_{\al,1}(-\la\,t^\al)=-\la\,t^{\al-1}E_{\al,\al}(-\la\,t^\al)
\quad(t>0).
\]
\end{lem}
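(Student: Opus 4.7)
The plan is to treat the four parts separately. Parts (c) and (d) reduce to direct series manipulation, while (a) and (b) rely on the classical Hankel-contour representation of the Mittag-Leffler function.

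For (c), I would split off the $k=0$ term in $E_{\al,1+\ell\al}(z)=\sum_{k\ge0}z^k/\Ga(\al k+1+\ell\al)$ and reindex the remaining sum via $k=j+1$; since $\Ga(\al k+1+\ell\al)=\Ga(\al j+1+(\ell+1)\al)$ under this substitution, the residual sum equals $z\,E_{\al,1+(\ell+1)\al}(z)$ exactly. For (d), termwise differentiation of $E_{\al,1}(-\la t^\al)=\sum_{k\ge0}(-\la t^\al)^k/\Ga(\al k+1)$ is justified by uniform convergence on compact subsets of $(0,\infty)$, since $E_{\al,1}$ is entire. The factor $\al k$ produced by $\f\rd{\rd t}t^{\al k}$ cancels against $\Ga(\al k+1)=\al k\,\Ga(\al k)$, and relabelling $k\mapsto j+1$ yields $-\la t^{\al-1}E_{\al,\al}(-\la t^\al)$.

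For (a) and (b), the key tool is the Hankel-contour representation
\[
E_{\al,\be}(z)=\f1{2\pi\mathrm i}\int_\cH\f{\zeta^{\al-\be}\e^\zeta}{\zeta^\al-z}\,\rd\zeta,
\]
valid for $|\arg z|<\pi$ with a suitable keyhole contour $\cH$ separating $0$ from $z^{1/\al}$. Specializing to $z=-\eta$ with $\eta>0$ and choosing $\cH$ to consist of two rays $\{r\e^{\pm\mathrm i\theta}:r\ge R\}$ with $\al\theta<\pi/2$ together with the connecting arc, one obtains $|\zeta^\al+\eta|\ge c(1+\eta)$ uniformly on $\cH$; combined with the exponential decay of $\e^\zeta$ along the rays and a change of variables $\zeta\mapsto\zeta/\eta^{1/\al}$, this produces (b). For the expansion in (a), I would substitute $(\zeta^\al+\eta)^{-1}=-\sum_{k=1}^N\zeta^{\al(k-1)}(-\eta)^{-k}+R_N$ into the integral and integrate termwise, invoking Hankel's formula $\f1{2\pi\mathrm i}\int_\cH\zeta^{-s}\e^\zeta\,\rd\zeta=1/\Ga(s)$. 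The $k=1$ term delivers the leading contribution $1/(\Ga(1-\al)\,\eta)$, and the uniform bound just obtained shows the remainder is $O(\eta^{-2})$.

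The main obstacle will be the strict positivity $E_{\al,1}(-\eta)>0$ in (a), rather than the expansion. For $0<\al\le1$ this is Pollard's classical theorem that $\eta\mapsto E_{\al,1}(-\eta)$ is completely monotone on $[0,\infty)$, and therefore equal to the Laplace transform of a positive measure. For $1<\al<2$ complete monotonicity fails, so one must combine the asymptotic expansion above (which ensures positivity for all sufficiently large $\eta$) with a finer sign analysis of the Hankel integrand on bounded intervals, or with a zero-counting argument in the complex plane, to exclude interior zeros. In the spirit of the present paper, I would write out (c) and (d) in full and defer the more delicate analytic facts in (a) and (b) to Podlubny \cite{P99} and Gorenflo et al.\ \cite{GKMS14}.
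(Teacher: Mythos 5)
Your treatment of (c) and (d) is exactly the ``direct calculation from the definition'' that the paper has in mind: split off the $k=0$ term and reindex for (c); differentiate termwise and use $\Ga(\al k+1)=\al k\,\Ga(\al k)$ for (d). For (a) and (b) the paper offers no proof at all---it simply cites Podlubny \cite[\S 1.2]{P99}---so your Hankel-contour sketch is, if anything, more explicit than the source; it is the standard route, and the mechanism you describe (geometric expansion of $(\zeta^\al+\eta)^{-1}$ plus Hankel's formula $\f1{2\pi\mathrm i}\int_\cH\zeta^{-s}\e^\zeta\,\rd\zeta=1/\Ga(s)$) is correct. One slip: the aperture condition on the rays should be $\pi/2<\te<\pi/\al$ (so that $\rRe\,\zeta\to-\infty$ along the rays, making $\e^\zeta$ decay, while $-\eta$ stays off the image of the contour under $\zeta\mapsto\zeta^\al$), not $\al\te<\pi/2$; with your condition the factor $\e^\zeta$ grows along the rays whenever $\al\ge1$.

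The genuine gap is in your plan for the strict positivity in (a) when $1<\al<2$. That case cannot be rescued by a ``finer sign analysis,'' because the claim is false there: for $1<\al<2$ one has $1-\al\in(-1,0)$, hence $\Ga(1-\al)<0$, and the very expansion $E_{\al,1}(-\eta)=\f1{\Ga(1-\al)\,\eta}+O(\eta^{-2})$ forces $E_{\al,1}(-\eta)<0$ for all sufficiently large $\eta$ (consistently with $E_{2,1}(-\eta)=\cos\sqrt\eta$, which changes sign). So your parenthetical ``which ensures positivity for all sufficiently large $\eta$'' holds only for $0<\al<1$, where $\Ga(1-\al)>0$. In that range Pollard's complete-monotonicity theorem, which you cite, represents $E_{\al,1}(-\eta)$ as the Laplace transform of a nontrivial positive measure and yields strict positivity, and that is all the paper ever needs: $0<\al<1$ is its standing assumption, so part (a) should be read (or corrected) with $0<\al<1$ rather than $0<\al<2$. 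Part (b), by contrast, is genuinely valid for all $0<\al<2$.
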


We mention that Lemma \ref{lem-ML}(a)--(b) are well-known results from
\cite[\S 1.2]{P99}, and (c)--(d) follow immediately from direct calculations by
definition \eqref{eq-def-ML}.

Regarding some important existing results of the solution to \eqref{eq-ibvp-u},
we state the following two lemmata for later use.

\begin{lem}\label{lem-sy11}
Fix $T>0$ arbitrarily. Concerning the solution $u$ to \eqref{eq-ibvp-u}, we
have$:$

{\rm(a)} Let $a\in L^2(\Om)$ and $F=0$. Then there exists a unique solution
$u\in C([0,T];L^2(\Om))\cap C((0,T];H^2(\Om)\cap H_0^1(\Om))$, which can be
represented as
\begin{equation}\label{eq-rep-v}
u(\,\cdot\,,t)=\sum_{n=1}^\infty E_{\al,1}(-\la_nt^\al)\,(a,\vp_n)\,\vp_n
\end{equation}
in $C([0,T];L^2(\Om))\cap C((0,T];H^2(\Om)\cap H_0^1(\Om))$, where
$\{(\la_n,\vp_n)\}_{n=1}^\infty$ is the eigensystem of $\cA$. Moreover, there
exists a constant $C=C(\Om,T,\al,\cA)>0$ such that
\begin{equation}\label{eq-est-H2}
\|u(\,\cdot\,,t)\|_{L^2(\Om)}\le C\|a\|_{L^2(\Om)},\quad
\|u(\,\cdot\,,t)\|_{H^2(\Om)}\le C\|a\|_{L^2(\Om)}\,t^{-\al}\quad(0<t\le T).
\end{equation}
In addition, $u:(0,T]\to H^2(\Om)\cap H^1_0(\Om)$ can be analytically extended
to a sector $\{z\in\BC;\,z\ne0,\ |\arg z|<\pi/2\}$.

{\rm(b)} Let $a=0$ and $F\in L^\infty(0,T;L^2(\Om))$. Then there exists a
unique solution $u\in L^2(0,T;$ $H^2(\Om)\cap H_0^1(\Om))$ such that
$\lim_{t\to0}\|u(\,\cdot\,,t)\|_{L^2(\Om)}=0$.
\end{lem}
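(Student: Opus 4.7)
The plan is to construct the solution explicitly through spectral decomposition and then verify all the stated properties using the Mittag-Leffler estimates collected in Lemma \ref{lem-ML}. Expanding $a=\sum_{n=1}^\infty(a,\vp_n)\vp_n$ in $L^2(\Om)$, I define the candidate
\[
u(\,\cdot\,,t):=\sum_{n=1}^\infty E_{\al,1}(-\la_n t^\al)\,(a,\vp_n)\,\vp_n.
\]
For part (a), the $L^2$ bound is immediate from Lemma \ref{lem-ML}(b), which gives $0\le E_{\al,1}(-\la_n t^\al)\le C$ uniformly in $n$ and $t\in[0,T]$, so Parseval yields $\|u(\,\cdot\,,t)\|_{L^2(\Om)}\le C\|a\|_{L^2(\Om)}$. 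For the $H^2$ bound I exploit the equivalence of $\|\,\cdot\,\|_{H^2(\Om)}$ and $\|\cA\,\cdot\,\|_{L^2(\Om)}$ on $\cD(\cA)$, giving
\[
\|\cA u(\,\cdot\,,t)\|_{L^2(\Om)}^2=\sum_{n=1}^\infty\la_n^2\,|E_{\al,1}(-\la_n t^\al)|^2\,|(a,\vp_n)|^2;
\]
Lemma \ref{lem-ML}(b) then furnishes $\la_n|E_{\al,1}(-\la_n t^\al)|\le C\la_n/(1+\la_n t^\al)\le C\,t^{-\al}$, which factors out uniformly and produces the claimed $t^{-\al}$ growth.

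Continuity of $u$ on $[0,T]$ in $L^2(\Om)$ and on $[t_0,T]$ in $H^2(\Om)\cap H_0^1(\Om)$ for every $t_0>0$ will follow from uniform convergence of the partial sums in these norms under the above bounds, while the trace $u(\,\cdot\,,0)=a$ is recovered from $E_{\al,1}(0)=1$ together with dominated convergence. To verify that $u$ solves \eqref{eq-ibvp-u}, I apply Lemma \ref{lem-ML}(d) termwise: each summand $E_{\al,1}(-\la_n t^\al)\vp_n$ satisfies $(\pa_t^\al+\cA)[E_{\al,1}(-\la_n t^\al)\vp_n]=0$, and once the series is shown to converge in $H^2$ at rate $t^{-\al}$, Fubini legitimizes interchanging $\pa_t^\al$ with the sum. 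Uniqueness is obtained by expanding any candidate solution in $\{\vp_n\}$ and noting that each coefficient $w_n(t):=(u(\,\cdot\,,t),\vp_n)$ satisfies the scalar fractional ODE $\pa_t^\al w_n+\la_n w_n=0$ with $w_n(0)=(a,\vp_n)$, whose unique solution is $E_{\al,1}(-\la_n t^\al)(a,\vp_n)$. The analytic extension into $\{|\arg z|<\pi/2\}$ uses that $E_{\al,1}$ is entire and obeys the refined bound $|E_{\al,1}(\ze)|\le C/(1+|\ze|)$ on the closed sector $|\arg\ze|\le\pi-\al\pi/2$ (a standard complex-plane estimate from \cite{P99}); since $z\mapsto-\la_n z^\al$ maps $|\arg z|<\pi/2$ into that sector when $0<\al<1$, the series converges uniformly on compact subsets of the sector in the $H^2\cap H_0^1$ norm, yielding the required holomorphic extension.

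Part (b) is handled analogously via the spectral form of the fractional Duhamel principle:
\[
u(\,\cdot\,,t):=\sum_{n=1}^\infty\left(\int_0^t(t-s)^{\al-1}E_{\al,\al}(-\la_n(t-s)^\al)\,(F(\,\cdot\,,s),\vp_n)\,\rd s\right)\vp_n.
\]
Lemma \ref{lem-ML}(b) applied to the kernel, together with the elementary estimate of $\int_0^t(t-s)^{\al-1}(1+\la_n(t-s)^\al)^{-1}\,\rd s$, produces bounds on $\la_n$ times the $n$-th coefficient which, after squaring, summing and integrating in $t$, yield $u\in L^2(0,T;H^2(\Om)\cap H_0^1(\Om))$; the kernel's $O(t^\al)$ growth delivers $\|u(\,\cdot\,,t)\|_{L^2(\Om)}\to0$ as $t\to0$, and uniqueness comes from the same eigenfunction-expansion argument as in (a). The principal obstacle I anticipate is the rigorous justification of applying $\pa_t^\al$ termwise to the series in (a): the Caputo derivative integrates $u'$ against the singular kernel $(t-s)^{-\al}$, so one must first control $\|u'(\,\cdot\,,t)\|_{L^2(\Om)}$ near $t=0$ via a second appeal to Lemma \ref{lem-ML}(d), and this is precisely the step where the $t^{-\al}$ weight in the $H^2$ estimate becomes decisive.
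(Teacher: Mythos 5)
The paper does not actually prove Lemma \ref{lem-sy11}: it is quoted from Sakamoto and Yamamoto \cite[Theorem 2.1]{SY11}, with only the remark that the $H^2$-valued analyticity is a slight strengthening obtainable ``by the same reasoning.'' Your eigenfunction-expansion construction, the bounds via Lemma \ref{lem-ML}(b), the scalar fractional ODE argument for uniqueness, and the sectorial Mittag-Leffler estimate for analyticity are exactly the ingredients of that reference, so in spirit you are reproducing the intended proof.

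Two technical points need repair. First, in part (b) the route you describe does not close: writing $u_n(t)$ for the $n$-th coefficient and $F_n=(F(\,\cdot\,,t),\vp_n)$, the ``elementary estimate'' you invoke gives
\[
\la_n\int_0^t\f{(t-s)^{\al-1}}{1+\la_n(t-s)^\al}\,\rd s=\f{1}{\al}\ln(1+\la_nt^\al),
\]
so the pointwise-in-$t$ bound on $\la_n|u_n(t)|$ carries a factor $\ln\la_n$ and is paired with $\sup_t|F_n(t)|$, which is not square-summable for $F\in L^\infty(0,T;L^2(\Om))$; squaring, summing and integrating therefore does not yield $u\in L^2(0,T;H^2(\Om))$. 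The standard fix, used in \cite{SY11}, is Young's inequality for the time convolution, $\|\la_nu_n\|_{L^2(0,T)}\le\|\la_ns^{\al-1}E_{\al,\al}(-\la_ns^\al)\|_{L^1(0,T)}\,\|F_n\|_{L^2(0,T)}$, together with the exact identity $\int_0^T\la_ns^{\al-1}E_{\al,\al}(-\la_ns^\al)\,\rd s=1-E_{\al,1}(-\la_nT^\al)\le1$ (Lemma \ref{lem-ML}(d) plus the monotone decrease of $t\mapsto E_{\al,1}(-\la_nt^\al)$), which is uniform in $n$; summing in $n$ then gives the claim. Second, the sector in your analyticity step is written backwards: the decay estimate $|E_{\al,1}(\ze)|\le C/(1+|\ze|)$ holds on $\{\pi-\al\pi/2\le|\arg\ze|\le\pi\}$, i.e.\! near the negative real axis, which is precisely where $-\la_nz^\al$ lands when $|\arg z|<\pi/2$ and $0<\al<1$; the sector $|\arg\ze|\le\pi-\al\pi/2$ as you state it contains the positive real axis, where $E_{\al,1}$ grows exponentially. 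Finally, you rightly flag the termwise Caputo differentiation as the delicate step, but note that for $a\in L^2(\Om)$ only one has $\|\pa_tu(\,\cdot\,,t)\|_{L^2(\Om)}\le C\,t^{-1}\|a\|_{L^2(\Om)}$, which is not integrable at $t=0$; the equation must therefore be verified in the generalized sense adopted in \cite{SY11} rather than by literally integrating $u'$ against the singular kernel from $t=0$.
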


We note that Lemma \ref{lem-sy11} almost coincides with
\cite[Theorem 2.1]{SY11}, but the regularity of the analyticity result stated
in Lemma \ref{lem-sy11}(a) is stronger. Indeed, one can improve the regularity
up to $H^2(\Om)$ by the same reasoning, but here we omit the details.

\begin{lem}[Weak maximum principle]\label{lem-wmp}
Let $a\in L^2(\Om)$ and $F\in L^\infty(0,T;L^2(\Om))$ be nonnegative, and $u$
be the solution to \eqref{eq-ibvp-u}. Then there holds $u\ge0$ a.e.\! in
$\Om\times(0,T)$.
\end{lem}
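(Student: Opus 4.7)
The plan is to follow Luchko's strategy, reducing to the case of a classical solution and then exploiting the extremum principle for the Caputo derivative. By linearity I would split $u=u_1+u_2$, where $u_1$ solves \eqref{eq-ibvp-u} with zero source and initial datum $a\ge0$, and $u_2$ solves \eqref{eq-ibvp-u} with $a=0$ and source $F\ge0$; it suffices to show that each is nonnegative. For both pieces I would first upgrade the data to something smooth (say $a\in C_0^\infty(\Om)$ and $F\in C^1([0,T];C_0^\infty(\Om))$, both nonnegative) so that Lemma \ref{lem-sy11} together with standard fractional parabolic regularity yields a classical solution with $u(\,\cdot\,,t)\in C^2(\Om)\cap C(\ov\Om)$ for $t>0$ and $u(x,\,\cdot\,)\in C^1[0,T]$. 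The conclusion in the general $a\in L^2(\Om)$, $F\in L^\infty(0,T;L^2(\Om))$ setting would then follow by a density argument: approximate the data by nonnegative smooth sequences, use the estimates of Lemma \ref{lem-sy11} to get $L^2$-convergence of the corresponding solutions, and extract a subsequence converging a.e.\! in $\Om\times(0,T)$ to $u$, so that pointwise nonnegativity of the approximants forces $u\ge0$ a.e.

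In the smooth regime I argue by contradiction. Suppose the minimum of $u$ on $\ov\Om\times[0,T]$ equals $m<0$ and is attained at some $(x_0,t_0)$. The homogeneous Dirichlet condition rules out $x_0\in\pa\Om$, while $a(x_0)\ge0>m$ rules out $t_0=0$. At such an interior spatio-temporal minimum, $\nabla u(x_0,t_0)=0$ and the Hessian in $x$ is positive semi-definite, so the symmetry $a_{ij}=a_{ji}$ and ellipticity yield
\[
-\sum_{i,j=1}^d\pa_j(a_{ij}(x_0)\pa_iu(x_0,t_0))
=-\sum_{i,j=1}^da_{ij}(x_0)\pa_i\pa_ju(x_0,t_0)\le0,
\]
while $c(x_0)u(x_0,t_0)\le0$ since $c\ge0$ and $u(x_0,t_0)<0$. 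Hence $\cA u(x_0,t_0)\le0$.

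The decisive ingredient is Luchko's extremum principle for the Caputo derivative: if $v\in C^1[0,T]$ attains its minimum on $[0,T]$ at an interior point $t_0$, then
\[
\pa_t^\al v(t_0)\le\f{v(t_0)-v(0)}{\Ga(1-\al)\,t_0^\al},
\]
with strictly negative right-hand side whenever $v(0)>v(t_0)$. Applied to $t\mapsto u(x_0,t)$, this gives $\pa_t^\al u(x_0,t_0)<0$. Combining, $(\pa_t^\al+\cA)u(x_0,t_0)<0$, contradicting $F(x_0,t_0)\ge0$. Therefore no negative minimum can occur and $u\ge0$ on the entire closed cylinder.

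The technical heart, and the main obstacle, is twofold. First, the extremum estimate for $\pa_t^\al$ must be established; this is done by splitting the integral defining $\pa_t^\al v(t_0)$ on $[0,t_0]$, integrating by parts, and exploiting the sign of $v(s)-v(t_0)\ge0$ together with the monotonicity of the singular kernel $(t_0-s)^{-\al}$. Second, and more delicate, is that the contradiction argument relies on pointwise values of $u$ and of $\pa_t^\al u$, so one must justify that these quantities make sense under the assumed regularity, and then arrange the limiting procedure carefully so that the pointwise nonnegativity of the smooth approximants $u^{(k)}$ is preserved under $C([0,T];L^2(\Om))$ convergence; this preservation explains why the conclusion is only almost-everywhere rather than pointwise.
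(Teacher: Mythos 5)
Your proposal is correct and is essentially the argument the paper itself relies on: the paper gives no proof of Lemma \ref{lem-wmp}, instead citing Luchko \cite[Theorem 3]{L09}, whose proof is exactly your combination of the Caputo extremum estimate at a negative interior minimum with a smoothing/density reduction from classical to $L^2$ data. The only imprecisions are minor: $u(x_0,\,\cdot\,)$ is in general not $C^1$ up to $t=0$ (its derivative behaves like $t^{\al-1}$, so one should invoke the extremum lemma under the weaker hypothesis of an integrable derivative on $(0,t_0)$, which holds here), the minimum may also occur at $t_0=T$ so the estimate must be used on $(0,T]$ rather than only at interior times, and the approximation of $F$ should be carried out in $L^2(0,T;L^2(\Om))$ rather than in the nonseparable space $L^\infty(0,T;L^2(\Om))$.
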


Since we impose a homogeneous Dirichlet boundary condition, Lemma \ref{lem-wmp}
is a special case of \cite[Theorem 3]{L09}, but our choices of the initial
data, the inhomogeneous term and the elliptic operator are more general.
Fortunately, the same argument still works in our settings, which indicates
Lemma \ref{lem-wmp} immediately. Again we omit the details here.

\Section{Proof of Theorem \ref{thm-smp} and Corollary
\ref{coro-smp}}\label{sec-proof-smp}

Now we proceed to the proof of the strong maximum principle. Throughout this
section, we concentrate on the homogeneous problem, that is,
\begin{equation}\label{eq-ibvp-v}
v_a\begin{cases}
\pa_t^\al v+\cA v=0 & \mbox{in }\Om\times(0,\infty),\\
v=a & \mbox{in }\Om\times\{0\},\\
v=0 & \mbox{on }\pa\Om\times(0,\infty),
\end{cases}
\end{equation}
where we emphasize the dependency of the solution upon the initial data $a$ by
denoting the solution as $v_a$.

To begin with, we investigate the Green function of problem \eqref{eq-ibvp-v}.
Using the Mittag-Leffler function and the eigensystem
$\{(\la_n,\vp_n)\}_{n=1}^\infty$, for $N\in\BN$ we set
\[
G_N(x,y,t):=\sum_{n=1}^NE_{\al,1}(-\la_nt^\al)\,\vp_n(x)\,\vp_n(y)
\quad(x,y\in\Om,\ t>0).
\]
According to Lemma \ref{lem-sy11}(a), there holds
\[
v_a(x,t)=\lim_{N\to\infty}\int_\Om G_N(x,y,t)\,a(y)\,\rd y
\]
in $C([0,\infty);L^2(\Om))\cap C((0,\infty);H^2(\Om)\cap H_0^1(\Om))$ for any
$a\in L^2(\Om)$ and any $t>0$. Therefore, for any fixed $x\in\Om$ and $t>0$, we
see that $v_a(x,t)$ is defined pointwisely and thus $G_N(x,\,\cdot\,,t)$ is
weakly convergent to
\begin{equation}\label{eq-def-G}
G(x,y,t):=\sum_{n=1}^\infty E_{\al,1}(-\la_nt^\al)\,\vp_n(x)\,\vp_n(y)
\end{equation}
as a series with respect to $y$. In particular, we obtain
$G(x,\,\cdot\,,t)\in L^2(\Om)$ for all $x\in\Om$ and all $t>0$. Moreover, the
solution to \eqref{eq-ibvp-v} can be represented as
\begin{equation}\label{eq-rep-G}
v_a(x,t)=\int_\Om G(x,y,t)\,a(y)\,\rd y\quad(x\in\Om,\ t>0).
\end{equation}

Next we show that for arbitrarily fixed $x\in\Om$ and $t>0$,
$G(x,\,\cdot\,,t)\ge0$ a.e.\! in $\Om$. Actually, assume that on the contrary
there exist $x_1\in\Om$ and $t_1>0$ such that the Lebesgue measure of the
subdomain $\om:=\{G(x_1,\,\cdot\,,t_1)<0\}\subset\Om$ is positive. Then it is
readily seen that
\begin{equation}\label{eq-neg}
v_{\chi_\om}(x_1,t_1)=\int_\Om G(x_1,y,t_1)\,\chi_\om(y)\,\rd y<0,
\end{equation}
where $\chi_\om$ is the characteristic function of $\om$ satisfying
$\chi_\om\in L^2(\Om)$ and $\chi_\om\ge0$. On the other hand, Lemma
\ref{lem-wmp} and \eqref{eq-rep-G} imply that $v_{\chi_\om}(x_1,t_1)\ge0$,
which contradicts with \eqref{eq-neg}. In summary, we have proved the
following lemma.

\begin{lem}\label{lem-Green}
Let $G(x,y,t)$ be the Green function defined in \eqref{eq-def-G}. Then for
arbitrarily fixed $x\in\Om$ and $t>0$, we have
\[
G(x,\,\cdot\,,t)\in L^2(\Om)\quad\mbox{and}\quad
G(x,\,\cdot\,,t)\ge0\ \mbox{a.e.\! in }\Om.
\]
\end{lem}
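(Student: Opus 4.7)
The plan is to read off the $L^2$-membership of $G(x,\cdot\,,t)$ directly from the spectral representation, and then to derive the non-negativity by contradiction using the weak maximum principle (Lemma \ref{lem-wmp}) with a carefully chosen initial datum.

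First I would fix $x\in\Om$ and $t>0$. By Lemma \ref{lem-sy11}(a), for every $a\in L^2(\Om)$ the partial sums $\int_\Om G_N(x,y,t)\,a(y)\,\rd y$ converge as $N\to\infty$ to $v_a(\,\cdot\,,t)$ in $C([0,\infty);L^2(\Om))\cap C((0,\infty);H^2(\Om)\cap H^1_0(\Om))$. Since $d\le3$, the Sobolev embedding $H^2(\Om)\hookrightarrow C(\ov\Om)$ turns this into a \emph{pointwise} statement: the scalar sequence $\int_\Om G_N(x,y,t)\,a(y)\,\rd y$ converges to $v_a(x,t)$ for each $a$. Hence the bounded linear functionals $a\mapsto\int_\Om G_N(x,y,t)\,a(y)\,\rd y$ on $L^2(\Om)$ are pointwise bounded, are uniformly bounded by Banach--Steinhaus, and converge weakly in $L^2(\Om)$ to a limiting functional whose Riesz representative is precisely the formal series in \eqref{eq-def-G}. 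This simultaneously yields $G(x,\cdot\,,t)\in L^2(\Om)$ and the integral representation \eqref{eq-rep-G}.

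With the $L^2$ assertion in hand, the non-negativity is obtained by contradiction. Assume there exist $x_1\in\Om$ and $t_1>0$ such that the set $\om:=\{y\in\Om;\,G(x_1,y,t_1)<0\}$ has positive Lebesgue measure. Choosing the non-negative initial datum $a=\chi_\om\in L^2(\Om)$ and applying \eqref{eq-rep-G} produces
\[
v_{\chi_\om}(x_1,t_1)=\int_\om G(x_1,y,t_1)\,\rd y<0,
\]
which contradicts Lemma \ref{lem-wmp}.

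The main technical obstacle is the first step: bridging the $L^2$-valued convergence of the truncated Green sums and the pointwise-in-$x$ identity \eqref{eq-rep-G}. This is where the regularity supplied by Lemma \ref{lem-sy11}(a) combined with the Sobolev embedding for $d\le3$ is essential, and it is the same restriction that underlies Theorem \ref{thm-smp}. Once the pointwise representation is justified, the non-negativity step is a one-line consequence of the weak maximum principle.
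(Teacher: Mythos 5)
Your proposal is correct and follows essentially the same route as the paper: $L^2$-membership of $G(x,\cdot\,,t)$ via the pointwise (in $x$) convergence of the truncated Green integrals, Banach--Steinhaus and Riesz representation, followed by non-negativity by contradiction with the weak maximum principle applied to $a=\chi_\om$ on the negative set $\om$. Your write-up is in fact slightly more explicit than the paper's about the uniform-boundedness/duality step that identifies the weak limit with the formal series \eqref{eq-def-G}.
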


Now we are well prepared to prove the strong maximum principle.

\begin{proof}[Proof of Theorem $\ref{thm-smp}$]
We deal with the homogeneous problem \eqref{eq-ibvp-v} with the initial data
$a\in L^2(\Om)$ such that $a\ge0$ and $a\not\equiv0$. By Lemma
\ref{lem-sy11}(a) and the Sobolev embedding $H^2(\Om)\subset C(\ov\Om)$ for
$d\le3$, we have $v_a\in C(\ov\Om\times(0,\infty))$. Meanwhile, according to
the weak maximum principle stated in Lemma \ref{lem-wmp}, there holds
$v_a\ge0$ in $\Om\times(0,\infty)$, indicating
\[
\cE_x:=\{t>0;\,v_a(x,t)\le0\}=\{t>0;\,v_a(x,t)=0\},
\]
that is, $\cE_x$ coincides with the zero point set of $v_a(x,t)$ as a function
of $t>0$.

Assume contrarily that there exists $x_0\in\Om$ such that the set $E_{x_0}$ is
not a finite set. Then $E_{x_0}$ contains at least an accumulation point
$t_*\in[0,\infty]$. We treat the cases of $t_*=\infty$, $t_*\in(0,\infty)$ and
$t_*=0$ separately.\medskip

{\it Case} 1. If $t_*=\infty$, then by definition there exists
$\{t_i\}_{i=1}^\infty\subset E_{x_0}$ such that $t_i\to\infty$ ($i\to\infty$)
and $u(x_0,t_i)=0$. Recall the explicit representation
\[
v_a(x_0,t)=\sum_{n=1}^\infty E_{\al,1}(-\la_nt^\al)\,(a,\vp_n)\,\vp_n(x_0).
\]
Then the asymptotic behavior described in Lemma \ref{lem-ML}(a) implies
\[
v_a(x_0,t)=\f1{\Ga(1-\al)\,t^\al}\sum_{n=1}^\infty
\f{(a,\vp_n)}{\la_n}\,\vp_n(x_0)+O\left(\f1{t^{2\al}}\right)\sum_{n=1}^\infty
\f{(a,\vp_n)}{\la_n^2}\,\vp_n(x_0)\quad\mbox{as }t\to\infty.
\]
Substituting $t=t_i$ with sufficiently large $i$ into the above expansion,
multiplying both sides by $t_i^\al$ and passing $i\to\infty$, we obtain
\[
b(x_0)=0,\quad\mbox{where}\quad
b:=\sum_{n=1}^\infty\f{(a,\vp_n)}{\la_n}\,\vp_n.
\]
Simple calculations reveal that $b$ satisfies the boundary value problem
\begin{equation}\label{eq-bvp}
\begin{cases}
\cA b=a\ge0 & \mbox{in }\Om,\\
b=0 & \mbox{on }\pa\Om.
\end{cases}
\end{equation}
Since the coefficient $c$ in the elliptic operator $\cA$ is non-negative, the
weak maximum principle for \eqref{eq-bvp} (see Gilbarg and Trudinger
\cite[Chapter 3]{GT01}) indicates $b\ge0$ in $\ov\Om\,$. Moreover, as $b$
attains its minimum at $x_0\in\Om$, the strong maximum principle for
\eqref{eq-bvp} implies $b\equiv\mathrm{const.}=0$ and thus $a=\cA b=0$, which
contradicts with the assumption $a\not\equiv0$. Therefore, $\infty$ cannot be
an accumulation point of $E_{x_0}$.\medskip

{\it Case} 2. Now suppose that the set of zeros $E_{x_0}$ admits an
accumulation point $t_*\in(0,\infty)$. By the analyticity of
$v_a:(0,\infty)\to H^2(\Om)\cap H^1_0(\Om)\subset C(\ov\Om)$, we see that
$v_a(x_0,t)$ is analytic with respect to $t>0$. Therefore, $v_a(x_0,t)$ should
vanish identically if its zero points accumulate at some finite and non-zero
point $t_*$. Then this case reduces to Case 1 and eventually result in a
contradiction.\medskip

{\it Case} 3. Since $v_a(x_0,t)$ is not analytic at $t=0$, we shall treat the
case of $t_*=0$ separately. Henceforth $C>0$ denotes generic constants
independent of $n\in\BN$ and $t\ge0$, which may change line by line.

By definition, there exists $\{t_i\}_{i=1}^\infty\subset E_{x_0}$ such that
$t_i\to0$ ($i\to\infty$) and, in view of the representation \eqref{eq-rep-G},
\[
v_a(x_0,t_i)=\int_\Om G(x_0,y,t_i)\,a(y)\,\rd y=0\quad(i=1,2,\ldots).
\]
Since $G(x_0,\,\cdot\,,t_i)\ge0$ by Lemma \ref{lem-Green} and $a\ge0$, we
deduce $G(x_0,y,t_i)\,a(y)=0$ for all $i=1,2,\ldots$ and almost all $y\in\Om$.
Since $a\not\equiv0$, it follows that $G(x_0,\,\cdot\,,t_i)$ should vanish in
the subdomain $\om:=\{a>0\}$ whose Lebesgue measure is positive. By the
representation \eqref{eq-def-G}, this indicates
\begin{equation}\label{eq-van-1}
\sum_{n=1}^\infty E_{\al,1}(-\la_nt_i^\al)\,\vp_n(x_0)\,\vp_n=0
\quad\mbox{a.e.\! in }\om\ (i=1,2,\ldots).
\end{equation}

Now we choose $\psi\in C^\infty_0(\om)$ arbitrarily as the initial data of
\eqref{eq-ibvp-v} and investigate
\begin{equation}\label{eq-van-def}
v_\psi(x_0,t)=\int_\Om G(x_0,y,t)\,\psi(y)\,\rd y=\sum_{n=1}^\infty
E_{\al,1}(-\la_nt^\al)\,(\psi,\vp_n)\,\vp_n(x_0).
\end{equation}
For later convenience, we abbreviate $\psi_n:=(\psi,\vp_n)\,\vp_n(x_0)$. We
shall show that the series in \eqref{eq-van-def} is convergent in
$C[0,\infty)$. In fact, by the Sobolev embedding $H^2(\Om)\subset C(\ov\Om)$
for $d\le3$, first we estimate
\[
|\vp_n(x_0)|\le C\|\vp_n\|_{H^2(\Om)}\le C\|\cA\vp_n\|_{L^2(\Om)}\le C\la_n.
\]
Next, since $\psi\in C^\infty_0(\om)\subset\cD(\cA^3)$, we have
\begin{equation}\label{eq-arg}
|(\psi,\vp_n)|=\f{|(\cA^3\psi,\vp_n)|}{\la_n^3}
\le\f{\|\cA^3\psi\|_{L^2(\om)}\|\vp_n\|_{L^2(\Om)}}{\la_n^3}
\le\f{C\|\psi\|_{C^6(\ov\om)}}{\la_n^3}.
\end{equation}
On the other hand, we know $\la_n\sim n^{2/d}$ as $n\to\infty$ (see, e.g.,
Courant and Hilbert \cite{CH53}). Therefore, the combination of the above
estimates yields
\[
|E_{\al,1}(-\la_nt^\al)\,\psi_n|\le C\|\psi\|_{C^6(\ov\om)}\la_n^{-2}
\le C\,n^{-4/d}\quad\mbox{as }n\to\infty,
\]
where the boundedness of $E_{\al,1}(-\la_nt^\al)$ is guaranteed by Lemma
\ref{lem-ML}(b). Since the restriction $d\le3$ gives $4/d>1$, we obtain
\[
\sum_{n=1}^\infty|E_{\al,1}(-\la_nt^\al)\,\psi_n|<\infty
\quad(\forall\,t\ge0,\ \forall\,\psi\in C_0^\infty(\om)),
\]
which indicates that $v_\psi(x_0,t)$ is well-defined in the sense of
$C[0,\infty)$. Meanwhile, by the same reasoning as that for \eqref{eq-arg}, for
any $\ell=0,1,2,\ldots$ we estimate
\[
|(\psi,\vp_n)|=\f{|(\cA^{\ell+3}\psi,\vp_n)|}{\la_n^{\ell+3}}
\le\f{C\|\psi\|_{C^{2(\ell+3)}(\ov\om)}}{\la_n^{\ell+3}},
\]
implying
\begin{equation}\label{eq-cov1}
\sum_{n=1}^\infty|\la_n^\ell\,\psi_n|
\le C\|\psi\|_{C^{2(\ell+3)}(\ov\om)}\sum_{n=1}^\infty\la_n^{-2}<\infty
\quad(\forall\,\ell=0,1,\ldots,\ \forall\,\psi\in C_0^\infty(\om)).
\end{equation}
Moreover, since $E_{\al,\be}(-\eta)$ is uniformly bounded for all $\eta\ge0$
and all $\be>0$ by Lemma \ref{lem-ML}(b), we further have
\begin{equation}\label{eq-cov2}
\sum_{n=1}^\infty|\la_n^\ell\,E_{\al,\be}(-\la_nt^\al)\,\psi_n|<\infty
\quad(\forall\,\ell=0,1,\ldots,\ \forall\,\be>0,\ \forall\,t\ge0,
\ \forall\,\psi\in C_0^\infty(\om)).
\end{equation}

Utilizing Lemma \ref{lem-ML}(c) with $\ell=0$, we treat $v_\psi(x_0,t)$ as
\[
v_\psi(x_0,t)=\sum_{n=1}^\infty\psi_n
-t^\al\sum_{n=1}^\infty\la_n\,E_{\al,1+\al}(-\la_nt^\al)\,\psi_n,
\]
where the boundedness of the involved summations were verified in
\eqref{eq-cov1}--\eqref{eq-cov2}. Taking $t=t_i$ and passing $i\to\infty$, we
obtain $\sum_{n=1}^\infty\psi_n=0$, implying
\[
v_\psi(x_0,t)
=\sum_{n=1}^\infty(-\la_nt^\al)\,E_{\al,1+\al}(-\la_nt^\al)\,\psi_n.
\]
For $t>0$, we divide the above equality by $-t^\al$ and take $\ell=1$ in Lemma
\ref{lem-ML}(c) to deduce
\[
\f{v_\psi(x_0,t)}{-t^\al}=\f1{\Ga(1+\al)}\sum_{n=1}^\infty\la_n\psi_n
-t^\al\sum_{n=1}^\infty\la_n^2\,E_{\al,1+2\al}(-\la_nt^\al)\,\psi_n.
\]
Again, we take $t=t_i$ and pass $i\to\infty$ to get
$\sum_{n=1}^\infty\la_n\psi_n=0$ and thus
\[
v_\psi(x_0,t)
=\sum_{n=1}^\infty(-\la_nt^\al)^2E_{\al,1+2\al}(-\la_nt^\al)\,\psi_n.
\]
Repeating the same process, we can show by induction that
\begin{equation}\label{eq-induct}
v_\psi(x_0,t)=\sum_{n=1}^\infty(-\la_nt^\al)^\ell
E_{\al,1+\ell\al}(-\la_nt^\al)\,\psi_n\quad(\forall\,\ell=0,1,2,\ldots).
\end{equation}
Actually, suppose that \eqref{eq-induct} holds for some $\ell\in\BN$. For
$t>0$, we divide \eqref{eq-induct} by $(-t^\al)^\ell$ and apply Lemma
\ref{lem-ML}(c) to deduce
\[
\f{v_\psi(x_0,t)}{(-t^\al)^\ell}=\f1{\Ga(1+\ell\al)}\sum_{n=1}^\infty
\la_n^\ell\psi_n-t^\al\sum_{n=1}^\infty
\la_n^{\ell+1}E_{\al,1+(\ell+1)\al}(-\la_nt^\al)\,\psi_n,
\]
where the boundedness of the involved summations follows from
\eqref{eq-cov1}--\eqref{eq-cov2}. Taking $t=t_i$ and passing $i\to\infty$, we
obtain $\sum_{n=1}^\infty\la_n^\ell\psi_n=0$ and thus
\[
v_\psi(x_0,t)=\sum_{n=1}^\infty
(-\la_nt^\al)^{\ell+1}E_{\al,1+(\ell+1)\al}(-\la_nt^\al)\,\psi_n.
\]

Now it suffices to prove
\begin{equation}\label{eq-lim}
\lim_{\ell\to\infty}\sum_{n=1}^\infty(-\la_nt^\al)^\ell
E_{\al,1+\ell\al}(-\la_nt^\al)\,\psi_n=0\quad(\forall\,t\ge0).
\end{equation}
In fact, writing $\eta=\la_nt^\al$, it turns out that
\[
(-\la_nt^\al)^\ell E_{\al,1+\ell\al}(-\la_nt^\al)
=(-\eta)^\ell\sum_{k=0}^\infty\f{(-\eta)^k}{\Ga(\al k+1+\ell\al)}
=\sum_{k=\ell}^\infty\f{(-\eta)^k}{\Ga(\al k+1)}
\]
coincides with the summation after the $\ell$th term in the series by which
$E_{\al,1}(-\eta)$ is defined. Noting that the series is uniformly convergent
with respect to $\eta\ge0$, we obtain
\[
\lim_{\ell\to\infty}(-\la_nt^\al)^\ell E_{\al,1+\ell\al}(-\la_nt^\al)=0
\quad(\forall\,n=1,2,\ldots,\ \forall\,t\ge0),
\]
which, together with the boundedness of $\sum_{n=1}^\infty|\psi_n|$, yields
\eqref{eq-lim} immediately. Since $v_\psi(x_0,t)$ is independent of $\ell$, by
\eqref{eq-induct} we eventually conclude
\begin{equation}\label{eq-van-2}
v_\psi(x_0,t)=\sum_{n=1}^\infty E_{\al,1}(-\la_nt^\al)\,\psi_n=0
\quad(t\ge0,\ \forall\,\psi\in C_0^\infty(\om)).
\end{equation}

Recall that the Laplace transform of $E_{\al,1}(-\la_nt^\al)$ reads (see, e.g.,
Podlubny \cite[p.21]{P99})
\[
\int^\infty_0\e^{-zt}E_{\al,1}(-\la_nt^\al)\,\rd t=\f{z^{\al-1}}{z^\al+\la_n}
\quad(\rRe\,z>\la_n^{1/\al}),
\]
which is analytically extended to $\rRe\,z>0$. Since the series in
\eqref{eq-van-2} converges in $C[0,\infty)$, we can take the Laplace transform
with respect to $t$ in \eqref{eq-van-2} to derive
\[
z^{\al-1}\sum_{n=1}^\infty\f{\psi_n}{z^\al+\la_n}=0
\quad(\rRe\,z>0,\ \forall\,\psi\in C_0^\infty(\om)),
\]
that is,
\begin{equation}\label{eq-van-3}
\sum_{n=1}^\infty\f{\psi_n}{\ze+\la_n}=0
\quad(\rRe\,\ze>0,\ \forall\,\psi\in C_0^\infty(\om)).
\end{equation}
By a similar argument for the convergence of \eqref{eq-van-def}, we see that
the above series is also convergent in any compact set in
$\BC\setminus\{-\la_n\}_{n=1}^\infty$, and the analytic continuation in $\ze$
yields that \eqref{eq-van-3} holds for
$\ze\in\BC\setminus\{-\la_n\}_{n=1}^\infty$. Especially, since the first
eigenvalue $\la_1$ is single, we can choose a small circle around $-\la_1$
which does not contain $-\la_n$ ($n\ge2$). Integrating \eqref{eq-van-3} on this
circle yields
\[
\psi_1=(\vp_1,\psi)\,\vp_1(x_0)=0\quad(\forall\,\psi\in C_0^\infty(\om)).
\]
Since $\psi\in C_0^\infty(\om)$ is arbitrarily chosen, there should hold
$\vp_1(x_0)\,\vp_1=0$ a.e.\! in $\om$. However, this contradicts with the
strict positivity of the first eigenfunction $\vp_1$ (see, e.g., Evans
\cite{E10}). Therefore, $t_*=0$ cannot be an accumulation point of $E_{x_0}$.

In summary, for any $x\in\Om$, we have excluded all the possibilities for
$\cE_x$ to possess any accumulation point, indicating that $\cE_x$ is at most a
finite set.
\end{proof}

Taking advantage of the Green function introduced in \eqref{eq-def-G}, it is
straightforward to demonstrate Corollary \ref{coro-smp}.

\begin{proof}[Proof of Corollary $\ref{coro-smp}$]
Recall that the solution $v_a$ allows a pointwise definition if
$a\in L^2(\Om)$, and $v_a\ge0$ in $\ov\Om\times(0,\infty)$ by Lemma
\ref{lem-wmp}. Assume contrarily that there exists $x_0\in\Om$ and $t_0>0$ such
that $v_g(x_0,t_0)=0$. Employing the representation \eqref{eq-rep-G}, we see
$\int_\Om G(x_0,y,t_0)\,a(y)\,\rd y=0$. Since $G(x_0,\,\cdot\,,t_0)\ge0$ by
Lemma \ref{lem-Green} and $a>0$, there should be $G(x_0,\,\cdot\,,t_0)=0$, that
is,
\[
\sum_{n=1}^\infty E_{\al,1}(\la_nt_0^\al)\,\vp_n(x_0)\,\vp_n=0
\quad\mbox{in }\Om.
\]
Since $\{\vp_n\}$ is a complete orthonormal basis in $L^2(\Om)$, we obtain
$E_{\al,1}(-\la_nt_0^\al)\,\vp_n(x_0)=0$ for all $n=1,2,\ldots$, especially,
$E_{\al,1}(-\la_1t_0^\al)\,\vp_1(x_0)=0$. However, it is impossible because
$E_{\al,1}(-\la_1t_0^\al)>0$ according to Lemma \ref{lem-ML}(a) and
$\vp_1(x_0)>0$. Therefore, such a pair $(x_0,t_0)$ cannot exist and we complete
the proof.
\end{proof}

\Section{Proof of Theorem \ref{thm-ISP}}\label{sec-proof-ISP}

Now we turn to the proof of Theorem \ref{thm-ISP}, i.e., the uniqueness for
Problem \ref{prob-ISP}. Recall that the initial-boundary value problem under
consideration is
\begin{equation}\label{eq-ibvp-w}
\begin{cases}
\pa_t^\al u(x,t)+\cA u(x,t)=\rho(t)\,g(x) & (x\in\Om,\ 0<t\le T),\\
u(x,0)=0 & (x\in\Om),\\
u(x,t)=0 & (x\in\pa\Om,\ 0<t\le T),
\end{cases}
\end{equation}
where $\rho\in C^1[0,T]$, $g\in\cD(\cA^\ve)$ with some $\ve>0$, $g\ge0$ and
$g\not\equiv0$. First we establish the following fractional Duhamel's principle
for the fractional diffusion equation.

\begin{lem}\label{lem-Duhamel}
Let $u$ be the solution to \eqref{eq-ibvp-w}, where $\rho\in C^1[0,T]$ and
$g\in\cD(\cA^\ve)$ with some $\ve>0$. Then $u$ allows the representation
\[
u(\,\cdot\,,t)=\int_0^t\mu(t-s)\,v_g(\,\cdot\,,s)\,\rd s\quad(0<t\le T),
\]
where $v_g$ solves the homogeneous problem \eqref{eq-ibvp-v} with $g$ as the
initial data, and
\begin{equation}\label{eq-def-mu}
\mu(t):=\f1{\Ga(\al)}\f\rd{\rd t}\int_0^t\f{\rho(s)}{(t-s)^{1-\al}}\,\rd s
\quad(0<t\le T).
\end{equation}
\end{lem}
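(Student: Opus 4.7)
The plan is to verify the identity by expanding both sides in the $L^2$-eigenbasis $\{\vp_n\}$ of $\cA$ and matching Fourier coefficients through a Laplace transform argument. Writing $g_n := (g,\vp_n)$, projecting \eqref{eq-ibvp-w} onto $\vp_n$ reduces the PDE to the scalar Caputo fractional ODE $\pa_t^\al u_n(t) + \la_n u_n(t) = g_n\,\rho(t)$ with $u_n(0)=0$, whose unique solution is known to be
\[
u_n(t) = g_n\int_0^t s^{\al-1}E_{\al,\al}(-\la_n s^\al)\,\rho(t-s)\,\rd s.
\]
On the other hand, Lemma \ref{lem-sy11}(a) yields $v_g(\,\cdot\,,s) = \sum_n g_n E_{\al,1}(-\la_n s^\al)\,\vp_n$, so the candidate right-hand side has Fourier coefficients
\[
w_n(t) := g_n\int_0^t \mu(t-s)\,E_{\al,1}(-\la_n s^\al)\,\rd s.
\]

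The crucial step is to show $u_n \equiv w_n$ via the Laplace transform. Observe that $\mu = \f{\rd}{\rd t}(J^\al\rho)$, where $J^\al$ is the Riemann--Liouville fractional integral of order $\al$, and $J^\al\rho(0)=0$ thanks to the continuity of $\rho$; hence one computes $\mathcal{L}[\mu](z) = z^{1-\al}\,\mathcal{L}[\rho](z)$ for $\rRe\,z$ sufficiently large. Combined with the standard Laplace identities $\mathcal{L}[s^{\al-1}E_{\al,\al}(-\la_n s^\al)](z) = (z^\al+\la_n)^{-1}$ and $\mathcal{L}[E_{\al,1}(-\la_n s^\al)](z) = z^{\al-1}(z^\al+\la_n)^{-1}$, both $\mathcal{L}[u_n]$ and $\mathcal{L}[w_n]$ simplify to $g_n\,\mathcal{L}[\rho](z)/(z^\al+\la_n)$, so $u_n\equiv w_n$ on $[0,T]$ by Laplace inversion.

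It then remains to reassemble the identity in $L^2(\Om)$, namely to show that $\sum_n w_n(t)\vp_n$ converges to $\int_0^t\mu(t-s)v_g(\,\cdot\,,s)\,\rd s$ for each $t\in(0,T]$, so that termwise equality promotes to the desired representation. The assumption $g\in\cD(\cA^\ve)$ provides enough decay of $\{g_n\}$ to invoke the Mittag-Leffler bound of Lemma \ref{lem-ML}(b) and apply Fubini and dominated convergence; local integrability of $\mu$, visible from the explicit expression
\[
\mu(t)=\f1{\Ga(\al)}\left[\rho(0)\,t^{\al-1}+\int_0^t\rho'(t-\tau)\,\tau^{\al-1}\,\rd\tau\right]
\]
derived from $\rho\in C^1[0,T]$, suffices to control the convolution with $v_g$.

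The main obstacle is to handle the singular behaviour of $\mu$ near $t=0$ (of order $t^{\al-1}$) together with the boundary layer of $v_g$ captured by the estimate $\|v_g(\,\cdot\,,s)\|_{H^2(\Om)}\le C s^{-\al}$ from Lemma \ref{lem-sy11}(a). The hypotheses $\rho\in C^1[0,T]$ and $g\in\cD(\cA^\ve)$ with $\ve>0$ are precisely what is needed to tame these singularities simultaneously and make all exchanges of summation and integration rigorous; once this is in place, the remainder of the argument reduces to the Laplace transform identity described above.
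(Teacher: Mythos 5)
Your argument is correct, but it takes a genuinely different route from the paper's. The paper does not decompose into eigenmodes at this point: it treats $\wt u(\,\cdot\,,t):=\int_0^t\mu(t-s)\,v_g(\,\cdot\,,s)\,\rd s$ as an ansatz, derives the bounds $|\mu(t)|\le C\,t^{\al-1}$ and $\|\pa_t v_g(\,\cdot\,,t)\|_{L^2(\Om)}\le C\|g\|_{\cD(\cA^\ve)}t^{\al\ve-1}$, computes $\pa_t^\al\wt u$ directly by Fubini---splitting it into $-\cA\wt u$ (via the homogeneous equation for $v_g$) plus $\rho(t)\,g$ (via the Beta-function identity $\int_0^t(t-s)^{-\al}s^{\al-1}\,\rd s=\Ga(\al)\,\Ga(1-\al)$)---and concludes by the uniqueness statement in Lemma \ref{lem-sy11}(b). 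Your spectral/Laplace route instead imports the mode-wise representation $u_n(t)=g_n\int_0^t s^{\al-1}E_{\al,\al}(-\la_ns^\al)\,\rho(t-s)\,\rd s$ for the inhomogeneous scalar equation (from \cite{SY11} or \cite{LG99}) and reduces the whole lemma to the transform identity $z^{1-\al}\,\mathcal{L}[\rho]\cdot z^{\al-1}(z^\al+\la_n)^{-1}=\mathcal{L}[\rho]\,(z^\al+\la_n)^{-1}$. What this buys you is a cleaner computation: you never need the delicate estimate of $\pa_t v_g$, and for the $L^2$ reassembly $g\in L^2(\Om)$ already suffices because $E_{\al,1}$ and $E_{\al,\al}$ are uniformly bounded on the negative reals. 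The price is two technical points you should make explicit: (i) $\rho$, $\mu$, $u_n$, $w_n$ live only on $[0,T]$, so you must extend $\rho$ to a (say) compactly supported $C^1$ function on $[0,\infty)$ before taking Laplace transforms, noting that the causal convolution structure leaves the values on $[0,T]$ unchanged; and (ii) the passage from $\mathcal{L}[u_n]=\mathcal{L}[w_n]$ back to $u_n\equiv w_n$ requires the injectivity of the Laplace transform on locally integrable functions (Lerch's theorem). With those two sentences added, the proof is complete.
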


\begin{proof}
Henceforth $C>0$ denotes generic constants independent of the choice of $g$.
First, since $\rho\,g\in L^\infty(0,T;L^2(\Om))$, Lemma \ref{lem-sy11}(b)
indicates that $u\in L^2(0,T;H^2(\Om)\cap H_0^1(\Om))$ and
$\lim_{t\to0}\|u(\,\cdot\,,t)\|_{L^2(\Om)}=0$. By setting
\begin{equation}\label{eq-def-wtu}
\wt u(\,\cdot\,,t):=\int_0^t\mu(t-s)\,v(\,\cdot\,,s)\,\rd s,
\end{equation}
we shall demonstrate
\[
u=\wt u\quad\mbox{in }L^2(0,T;H^2(\Om)\cap H_0^1(\Om)),
\quad\lim_{t\to0}\|\wt u(\,\cdot\,,t)\|_{L^2(\Om)}=0.
\]

Since $\rho\in C^1[0,T]$, simple calculations for \eqref{eq-def-mu} yield
\begin{align}
\mu(t) & =\f1{\Ga(\al)}\f\rd{\rd t}\left(-\left.\f{\rho(s)\,(t-s)^\al}\al
\right|_{s=0}^{s=t}+\f1\al\int_0^t(t-s)^\al\rho'(s)\,\rd s\right)\nonumber\\
& =\f1{\Ga(\al)}\f\rd{\rd t}\left(\f{\rho(0)}\al\,t^\al
+\f1\al\int_0^t(t-s)^\al\rho'(s)\,\rd s\right)\nonumber\\
& =\f1{\Ga(\al)}\left(\f{\rho(0)}{t^{1-\al}}
+\int_0^t\f{\rho'(s)}{(t-s)^{1-\al}}\,\rd s\right),\label{eq-rep-mu}
\end{align}
which further implies
\begin{equation}\label{eq-est-mu}
\mu\in L^1(0,T),\quad|\mu(t)|\le C\,t^{\al-1}\ (0<t\le T).
\end{equation}

Regarding the solution $v_g$ to \eqref{eq-ibvp-v}, since $g\in L^2(\Om)$, the
application of estimate \eqref{eq-est-H2} in Lemma \ref{lem-sy11}(a) yields
\[
\|v_g(\,\cdot\,,t)\|_{L^2(\Om)}\le C\|g\|_{L^2(\Om)},\quad
\|v_g(\,\cdot\,,t)\|_{H^2(\Om)}\le C\|g\|_{L^2(\Om)}\,t^{-\al}\quad(0<t\le T).
\]
By the relation \eqref{eq-def-wtu} and the estimate in \eqref{eq-est-mu}, for
$0<t\le T$ we estimate
\begin{align*}
\|\wt u(\,\cdot\,,t)\|_{L^2(\Om)}
& \le\int_0^t|\mu(t-s)|\|v_g(\,\cdot\,,s)\|_{L^2(\Om)}\,\rd s
\le C\|g\|_{L^2(\Om)}\int_0^ts^{\al-1}\,\rd s\le C\|g\|_{L^2(\Om)}\,t^\al,\\
\|\wt u(\,\cdot\,,t)\|_{H^2(\Om)}
& \le\int_0^t|\mu(t-s)|\|v_g(\,\cdot\,,s)\|_{H^2(\Om)}\,\rd s
\le C\|g\|_{L^2(\Om)}\int_0^t(t-s)^{\al-1}s^{-\al}\,\rd s\\
& \le C\|g\|_{L^2(\Om)},
\end{align*}
indicating
\[
\wt u\in L^\infty(0,T;H^2(\Om)\cap H_0^1(\Om))\subset
L^2(0,T;H^2(\Om)\cap H_0^1(\Om)),
\quad\lim_{t\to0}\|\wt u(\,\cdot\,,t)\|_{L^2(\Om)}=0.
\]
On the other hand, according to the explicit representation \eqref{eq-rep-v}
and Lemma \ref{lem-ML}(d), we obtain
\[
\pa_tv_g(\,\cdot\,,t)
=-t^{\al-1}\sum_{n=1}^\infty E_{\al,\al}(-\la_nt^\al)\,(g,\vp_n)\,\vp_n.
\]
By Lemma \ref{lem-ML}(b) and the fact $g\in\cD(\cA^\ve)$ with $\ve>0$, we
estimate for $0<t\le T$ that
\begin{align}
\|\pa_tv_g(\,\cdot\,,t)\|_{L^2(\Om)} & \le t^{2(\al-1)}\sum_{n=1}^\infty
|E_{\al,\al}(-\la_nt^\al)\,(g,\vp_n)|^2\nonumber\\
& =t^{2(\al-1)}\sum_{n=1}^\infty\left|\la_n^{1-\ve}
E_{\al,\al}(-\la_nt^\al)\right|^2|\la_n^\ve\,(g,\vp_n)|^2\nonumber\\
& \le\left(C\,t^{\al\ve-1}\right)^2\sum_{n=1}^\infty
\left|\f{(\la_nt^\al)^{1-\ve}}{1+\la_nt^\al}\right|^2|\la_n^\ve\,(g,\vp_n)|^2
\le\left(C\|g\|_{\cD(\cA^\ve)}\,t^{\al\ve-1}\right)^2.\label{eq-est-wtut}
\end{align}

In order to show $u=\wt u$, it suffices to verify that $\wt u$ also solves the
initial-boundary value problem \eqref{eq-ibvp-w} which possesses a unique
solution (see Lemma \ref{lem-sy11}(b)). To calculate $\pa_t^\al\wt u$, first we
formally calculate
\begin{equation}\label{eq-wtut}
\pa_t\wt u(\,\cdot\,,t)=\pa_t\int_0^t\mu(s)\,v_g(\,\cdot\,,t-s)\,\rd s
=\int_0^t\mu(s)\,\pa_tv_g(\,\cdot\,,t-s)\,\rd s+\mu(t)\,g.
\end{equation}
Then we employ \eqref{eq-est-mu} and \eqref{eq-est-wtut} to estimate
\begin{align*}
\|\pa_t\wt u(\,\cdot\,,t)\|_{L^2(\Om)} & \le\int_0^t|\mu(t-s)|
\|\pa_sv_g(\,\cdot\,,s)\|_{L^2(\Om)}\,\rd s+|\mu(t)|\|g\|_{L^2(\Om)}\\
& \le C\|g\|_{\cD(\cA^\ve)}\int_0^t(t-s)^{\al-1}s^{\al\ve-1}\,\rd s
+C\|g\|_{L^2(\Om)}\,t^{\al-1}\\
& \le C\|g\|_{\cD(\cA^\ve)}\,t^{\al(1+\ve)-1}+C\|g\|_{L^2(\Om)}\,t^{\al-1}
\le C\|g\|_{\cD(\cA^\ve)}\,t^{\al-1}\quad(0<t\le T),
\end{align*}
implying that the above differentiation makes sense in $L^2(\Om)$ for
$0<t\le T$. By definition, we have
\begin{align*}
& \pa_t^\al\wt u(\,\cdot\,,t)=\f1{\Ga(1-\al)}\int_0^t
\f{\pa_s\wt u(\,\cdot\,,s)}{(t-s)^\al}\,\rd s=I_1+I_2\,g,\quad\mbox{where}\\
& I_1:=\f1{\Ga(1-\al)}\int_0^t\f1{(t-s)^\al}\int_0^s
\mu(\tau)\,\pa_sv(\,\cdot\,,s-\tau)\,\rd\tau\rd s,\\
& I_2:=\f1{\Ga(1-\al)}\int_0^t\f{\mu(s)}{(t-s)^\al}\,\rd s.
\end{align*}
The governing equation \eqref{eq-ibvp-v} for $v_g$ and formula
\eqref{eq-rep-mu} for $\mu$ imply respectively
\begin{align}
I_1 & =\f1{\Ga(1-\al)}\int_0^t\mu(\tau)\int_\tau^t
\f{\pa_sv(\,\cdot\,,s-\tau)}{(t-s)^\al}\,\rd s\rd\tau\nonumber\\
& =\int_0^t\mu(\tau)\left(\f1{\Ga(1-\al)}\int_0^{t-\tau}
\f{\pa_sv(\,\cdot,,s)}{((t-\tau)-s)^\al}\,\rd s\right)\rd\tau
=\int_0^t\mu(\tau)\,\pa_t^\al v(\,\cdot\,,t-\tau)\,\rd\tau\nonumber\\
& =-\int_0^t\mu(\tau)\,\cA v(\,\cdot\,,t-\tau)\,\rd\tau
=-\cA\int_0^t\mu(\tau)\,v(\,\cdot\,,t-\tau)\,\rd\tau
=-\cA\wt u(\,\cdot\,,t),\nonumber\\
I_2 & =\f1{\Ga(1-\al)\,\Ga(\al)}\int_0^t\f1{(t-s)^\al}
\left(\f{\rho(0)}{s^{1-\al}}+\int_0^s
\f{\rho'(\tau)}{(s-\tau)^{1-\al}}\,\rd\tau\right)\rd s\nonumber\\
& =\f1{\Ga(1-\al)\,\Ga(\al)}\left(\rho(0)\int_0^t
\f{\rd s}{(t-s)^\al\,s^{1-\al}}+\int_0^t\rho'(\tau)\int_\tau^t
\f{\rd s}{(t-s)^\al(s-\tau)^{1-\al}}\,\rd\tau\right)\nonumber\\
& =\rho(0)+\int_0^t\rho'(\tau)\,\rd\tau=\rho(t).\label{eq-rev-mu}
\end{align}
Therefore, we conclude $\pa_t^\al\wt u+\cA\wt u=\rho\,g$ and the proof is
completed.
\end{proof}

At this stage, we can proceed to show Theorem \ref{thm-ISP} by applying the
established strong maximum principle.

\begin{proof}[Completion of Proof of Theorem $\ref{thm-ISP}$]
Let the conditions in the statement of Theorem \ref{thm-ISP} be valid, namely,
it is assumed that $\rho\in C^1[0,T]$, $g\in\cD(\cA^\ve)$ with $\ve>0$,
$g\ge0$, $g\not\equiv0$, and the solution $u$ to \eqref{eq-ibvp-w} vanishes in
$\{x_0\}\times[0,T]$ for some $x_0\in\Om$. According to the fractional
Duhamel's principle proved above, we have
\[
u(x_0,t)=\int_0^t\mu(t-s)\,v_g(x_0,s)\,\rd s=0\quad(0\le t\le T),
\]
where $\mu$ was defined in \eqref{eq-def-mu} and $v_g$ solves \eqref{eq-ibvp-v}
with the initial data $g$. Now the estimate \eqref{eq-est-H2} in Lemma
\ref{lem-sy11}(a) and the Sobolev embedding indicate
\[
|v_g(x_0,t)|\le C\|v_g(\,\cdot\,,t)\|_{H^2(\Om)}\le C\|g\|_{L^2(\Om)}\,t^{-\al}
\quad(0<t\le T)
\]
and thus $v_g(x_0,\,\cdot\,)\in L^1(0,T)$. Meanwhile, \eqref{eq-est-mu}
guarantees $\mu\in L^1(0,T)$. Therefore, the Titchmarsh convolution theorem
(see \cite{T26}) implies the existence of $T_1,T_2\ge0$ satisfying
$T_1+T_2\ge T$ such that $\mu(t)=0$ for almost all $t\in(0,T_1)$ and
$v_g(x_0,t)=0$ for all $t\in[0,T_2]$. However, since the initial data $g$ of
\eqref{eq-ibvp-v} satisfies $g\ge0$ and $g\not\equiv0$, Theorem \ref{thm-smp}
asserts that $v_g(x_0,\,\cdot\,)$ only admits at most a finite number of zero
points, i.e., $v_g(x_0,\,\cdot\,)>0$ a.e.\! in $(0,T)$. As a result, the only
possibility is $T_2=0$ and thus $T_1=T$, that is, $\mu=0$ a.e.\! in $(0,T)$.

Finally, it suffices to utilize the following reverse formula
\[
\rho(t)=\f1{\Ga(1-\al)}\int_0^t\f{\mu(s)}{(t-s)^\al}\,\rd s,
\]
which was obtained in \eqref{eq-rev-mu}. We apply Young's inequality to
conclude
\[
\|\rho\|_{L^1(0,T)}=\f1{\Ga(1-\al)}\left\|\int_0^t
\f{\mu(s)}{(t-s)^\al}\,\rd s\right\|_{L^1(0,T)}
\le\f{T^{1-\al}}{\Ga(2-\al)}\|\mu\|_{L^1(0,T)}=0,
\]
which finishes the proof.
\end{proof}

\end{document}